\newtheorem{theorem}{Theorem}
\newtheorem{lemma}{Lemma}
\newtheorem{proposition}{Proposition}
\newcommand{\Rmnum}[1]{\expandafter\@slowromancap\romannumeral #1@}
\begin{document}

\title{Scheduling Post-Disaster Repairs in Electricity Distribution Networks}
\author{Yushi Tan, Feng Qiu, Arindam K. Das, Daniel S. Kirschen, \\ Payman Arabshahi, Jianhui Wang}
\date{}

\maketitle

\section*{Abstract}

Natural disasters, such as hurricanes, earthquakes and large wind or ice storms, typically require the repair of a large number of components in electricity distribution networks. Since power cannot be restored before these repairs have been completed, optimally scheduling the available crews to minimize the cumulative duration of the customer interruptions reduces the harm done to the affected community. Considering the radial network structure of the distribution system, this repair and restoration process can be modeled as a scheduling problem with soft precedence constraints. As a benchmark, we first formulate this problem as a time-indexed ILP with valid inequalities. Three practical methods are then proposed to solve the problem: (i) an LP-based list scheduling algorithm, (ii) a single to multi-crew repair schedule conversion algorithm, and (iii) a dispatch rule based on $\rho$-factors which can be interpreted as Component Importance Measures. We show that the first two algorithms are $2$ and $\left(2 - \frac{1}{m}\right)$ approximations respectively. We also prove that the latter two algorithms are equivalent. Numerical results validate the effectiveness of the proposed methods.

\section*{Keywords}

Electricity distribution network, Natural disasters, Infrastructure resilience, Scheduling with soft precedence constraints, Time-indexed integer programming, LP-based list scheduling, Conversion algorithm, Component Importance Measure (CIM)

\section{Introduction}

Natural disasters, such as Hurricane Sandy in November 2012, the Christchurch Earthquake of February 2011 or the June 2012 Mid-Atlantic and Midwest Derecho, caused major damage to the electricity distribution networks and deprived homes and businesses of electricity for prolonged periods. Such power outages carry heavy social and economic costs. Estimates of the annual cost of power outages caused by severe weather between 2003 and 2012 range from \$18 billion to \$33 billion on average\cite{eotp2013resiliency}.  Physical damage to grid components must be repaired before power can be restored\cite{gridwise2013resilience,nerc2014sandy}. Hurricanes often cause storm surges that flood substations and corrode metal, electrical components and wiring \cite{newyork2013resilient}. Earthquakes can trigger ground liquefaction that damage buried cables and dislodge transformers \cite{eidinger2012christchurch}. Wind and ice storms bring down trees, breaking overhead cables and utility poles \cite{doe2012derecho}. As the duration of an outage increases, its economic and social costs rise exponentially. See \cite{wangresearch} \cite{Reed:2009ekbaca} for discussions of the impacts of natural disasters on power grids and \cite{Vugrin:2011kgba} \cite{Cimellaro:2010dpbaca} for its impact on other infrastructures.\\
It is important to distinguish the distribution repair and restoration problem discussed in this paper from the blackout restoration problem and the service restoration problem. Blackouts are large scale power outages (such as the 2003 Northeast US and Canada blackout) caused by an instability in the power generation and the high voltage transmission systems. This instability is triggered by an electrical fault or failure and is amplified by a cascade of component disconnections. Restoring power in the aftermath of a blackout is a different scheduling problem because most system components are not damaged and only need to be re-energized. See \cite{adibi1994power} \cite{adibi2006overcoming}
for a discussion of the blackout restoration problem and \cite{sun2011optimal} for a mixed-integer programming approach for solving this problem.  On the other hand, service restoration focuses on re-energizing a part of the local, low voltage distribution grid that has been automatically disconnected following a fault on a single component or a very small number of components. This can usually be done by isolating the faulted components and re-energizing the healthy parts of the network using switching actions. The service restoration problem thus involves finding the optimal set of switching actions. The repair of the faulted component is usually assumed to be taking place at a later time and is not considered in the optimization model. Several approaches have been proposed for the optimization of service restoration such as heuristics \cite{toune2002comparative} \cite{hou2011computation}, knowledge based systems \cite{ma1992operational}, and dynamic programming \cite{perez2008optimal}.

Unlike the outages caused by system instabilities or localized faults, outages caused by natural disasters require the repair of numerous components in the distribution grid before consumers can be reconnected. The research described in this paper therefore aims to schedule the repair of a significant number of damaged components, so that the distribution network can be progressively re-energized in a way that minimizes the cumulative harm over the total restoration horizon. Fast algorithms are needed to solve this problem because it must be solved immediately after the disaster and may need to be re-solved multiple times as more detailed information about the damage becomes available. Relatively few papers address this problem. Coffrin and Van Hentenryck \cite{coffrin2014transmission} propose an MILP formulation to co-optimize the sequence of repairs, the load pick-ups and the generation dispatch. However, the sequencing of repair does not consider the fact that more than one repair crew could work at the same time. Nurre et al. \cite{nurre2012restoring} formulate an integrated network design and scheduling (INDS) problem with multiple crews, which focuses on selecting a set of nodes and edges for installation in general infrastructure systems and scheduling them on work groups. They also propose a heuristic dispatch rule based on network flows and scheduling theory.

The rest of the paper is organized as follows. In Section 2, we define the problem of optimally scheduling multiple repair crews in a radial electricity distribution network after a natural disaster, and show that this problem is at least strongly $\mathcal{NP}$-hard. In Section 3, we formulate the post-disaster repair problem as an integer linear programming (ILP) using a network flow model and present a set of valid inequalities. Subsequently, we propose three polynomial time approximation algorithms based on adaptations of known algorithms in parallel machine scheduling theory, and provide performance bounds on their worst-case performance. A list scheduling algorithm based on an LP relaxation of the ILP model is discussed in Section 4; an algorithm which converts the optimal single crew repair sequence to a multi-crew repair sequence is presented in Section 5, along with a equivalent heuristic dispatch rule based on $\rho$-factors. In Section 6, we apply these methods to several standard test models of distribution networks. Section 7 draws conclusions.
\section{Problem formulation}
\label{sec:problemForm}
%
A distribution network can be represented by a graph $G$ with the set of nodes $N$ and the set of edges (a.k.a, lines) $L$. We assume that the network topology $G$ is radial, which is a valid assumption for most electricity distribution networks. Let $S \subset N$ represent the set of source nodes which are initially energized and $D = N \setminus S$ represent the set of sink nodes where consumers are located. An edge in $G$ represents a distribution feeder or some other connecting component. Severe weather can damage these components, resulting in a widespread disruption of power supply to the consumers. Let $L^{D}$ and $L^{I} = L \setminus L^D$ denote the sets of damaged and intact edges, respectively. Each damaged line $l \in L^D$ requires a repair time $p_{l}$ which is determined by the extent of damage and the location of $l$. We assume that it would take every crew the same amount of time to repair the same damaged line. Without any loss of generality, we assume that there is only one source node in $G$. If an edge is damaged, all downstream nodes lose power due to lack of electrical connectivity. In this paper, we consider the case where multiple crews work simultaneously and independently on the repair of separate lines, along with the special case where a single crew must carry all the repairs.  Finally, we make the assumption that crew travel times between damage sites are minimal and can be either ignored or factored into the component repair times. Therefore, our goal is to find a schedule by which the damaged lines should be repaired such that the aggregate harm due to loss of electric energy is minimized. We define this harm as follows:
%
\begin{equation}
\sum_{n \in N}w_{n}T_{n},
\label{eqn:harm}
\end{equation}
where $w_{n}$ is a positive quantity that captures the importance of node $n$ and $T_{n}$ is the time required to restore power at node $n$. 
The importance of a node can depend on multiple factors, including but not limited to, the amount of load connected to it, the type of load served, and interdependency with other critical infrastructure networks. For example, re-energizing a node supplying a major hospital should receive a higher priority than a node supplying a similar amount of residential load. Similarly, it is conceivable that a node that provides electricity to a water sanitation plant would be assigned a higher priority. These priority factors would need to be assigned by the utility companies and their determination is outside the scope of this paper. We simply assume knowledge of the $w_n$'s in the context of this paper.

The time to restore node $n$, $T_n$, is approximated by the \emph{energization time} $E_n$, which is defined as the time node $n$ first connects to the source node. System operators normally need to consider voltage and stability issues before restoring a node. However, this is not a major issue in distribution networks. The operators progressively restore a radial distribution network with enough generation capacity back to its normal operating state. And even if a rigorous power flow model is taken into account, the actual demands after re-energization are not known and hard to forecast. As a result, we model network connectivity using a simple network flow model, i.e., as long as a sink node is connected to the source, we assume that all the load on this node can be supplied without violating any security constraint. For simplicity, we treat the three-phase distribution network as if it were a single phase system. Our analysis could be extended to a three-phase system using a multi-commodity flow model, as in~\cite{yamangil2014designing}.


\subsection{Soft Precedence Constraints}
\label{subsec:soft}
We construct two simplified directed radial graphs to model the effect that the topology of the distribution network has on scheduling. The first graph, $G^{\prime}$, is called the `damaged component graph'. All nodes in $G$ that are connected by intact edges are contracted into a supernode in $G^{\prime}$. The set of edges in $G^{\prime}$ is the set of damaged lines in $G$, $L^D$. From a computational standpoint, the nodes of $G^{\prime}$ can be obtained by treating the edges in $G$ as undirected, deleting the damaged edges/lines, and finding all the connected components of the resulting graph. The set of nodes in each such connected component represents a (super)node in $G^{\prime}$. The edges in $G^{\prime}$ can then be placed straightforwardly by keeping track of which nodes in $G$ are mapped to a particular node in $G^{\prime}$. The directions to these edges follow trivially from the network topology. $G^{\prime}$ is useful in the ILP formulation introduced in Section~\ref{sec:ILP}.

The second graph, $P$, is called a `soft precedence constraint graph', which is constructed as follows. The nodes in this graph are the damaged lines in $G$ and an edge exists between two nodes in this graph if they share the same node in $G^{\prime}$. Computationally, the precedence constraints embodied in $P$ can be obtained by replacing lines in $G^{\prime}$ with nodes and the nodes in $G^{\prime}$ with lines. Such a graph enables us to consider the hierarchal relationship between damaged lines, which we define as \emph{soft precedence constraints}.

\begin{figure}[htbp]
\centering
\includegraphics[width=0.5\linewidth]{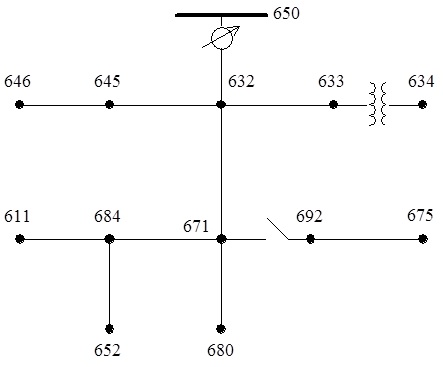}
\caption{IEEE 13 Node Test Feeder}
\label{fig:ieee13}
\end{figure}
\begin{figure}
\centering
\subfloat[$G'$ graph]{\includegraphics[width=0.4\textwidth]{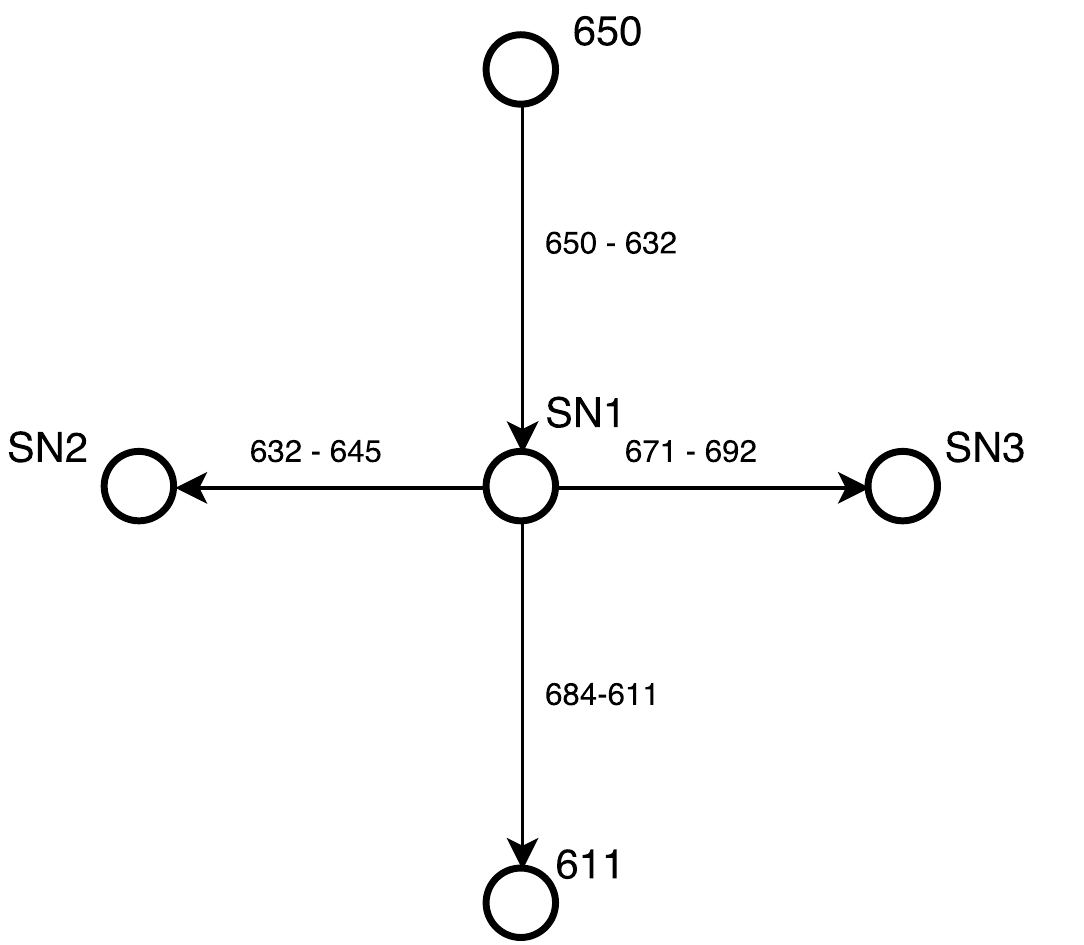}} \quad
\subfloat[$P$ graph]{\includegraphics[width=0.4\textwidth]{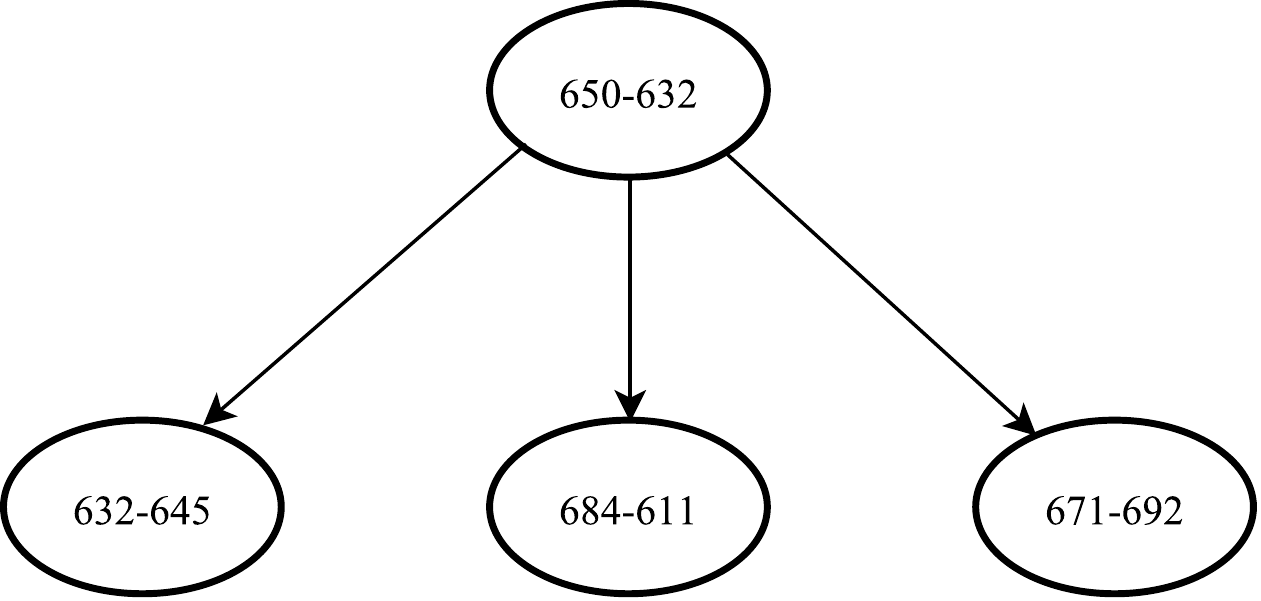}}
\caption{(a) The damaged component graph, $G^{\prime}$, obtained from Fig.~\ref{fig:ieee13}, assuming that the damaged lines are $650-632$, $632-645$, $684-611$ and $671-692$. (b) The corresponding soft precedence graph, $P$.}
\label{fig:sim_dag}
\end{figure}

A substantial body of research exists on scheduling with precedence constraints. In general, the precedence constraint $i \prec j$ requires that job $i$ be completed before job $j$ is started, or equivalently, $C_j \geq C_i$, where $C_j$ is the completion time of job $j$. Such precedence constraints, however, are not applicable in post-disaster restoration. While it is true that a sink node in an electrical network cannot be energized unless there is an intact path  (i.e., all damaged lines along that path have already been repaired) from the source (feeder) to this sink node, this does not mean that multiple lines on some path from the source to the sink cannot be repaired concurrently.

We keep track of two separate time vectors: the completion times of line repairs, denoted by $C_l$'s, and the energization times of nodes, denoted by $E_n$'s. While we have so far associated the term `energization time' with nodes in the given network topology, $G$, it is also possible to define energization times on the lines. Consider the example in Fig.~\ref{fig:sim_dag}. The precedence graph, $P$, requires that the line $650-632$ be repaired prior to the line $671-692$. If this (soft) precedence constraint is met, as soon as the line $671-692$ is repaired, it can be energized, or equivalently, all nodes in $\text{SN}_3$ (nodes $692$ and $675$) in the damaged component graph, $G^{\prime}$, can be deemed to be energized. The energization time of the line $671-692$ is therefore identical to the energization times of nodes $692$ and $675$. Before generalizing the above example, we need to define some notations. Given a directed edge $l$, let $h(l)$ and $t(l)$ denote the head and tail node of $l$. Let $l = h(l) \rightarrow t(l)$ be any edge in the damaged component graph $G^{\prime}$. Provided the soft precedence constraints are met, it is easy to see that $E_l = E_{t(l)}$, where $E_l$ is the energization time of line $l$ and $E_{t(l)}$ is the energization time of the node $t(l)$ in $G$. Analogously, the weight of node $t(l)$, $w_{t(l)}$, can be interpreted as a weight on the line $l$, $w_l$.
The soft precedence constraint, $i \prec_S j$, therefore implies that line $j$ cannot be energized unless line $i$ is energized, or equivalently, $E_j \geq E_i$, where $E_j$ is the energization time of line $j$.

\begin{proposition}
Given any feasible schedule of post-disaster repairs, the energization time $E_j$ always satisfies,
\begin{align}
E_j = \underset{i \, \preceq_S \, j}{\max} \,\; C_i
\end{align}
\end{proposition}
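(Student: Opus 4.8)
The plan is to prove the two inequalities $E_j \ge \max_{i \preceq_S j} C_i$ and $E_j \le \max_{i \preceq_S j} C_i$ separately, where $\preceq_S$ denotes the reflexive-transitive closure of the soft precedence relation on the damaged lines (equivalently, the ancestor relation in the radial graph $G'$, including $j$ itself). Throughout I will use the fact, established in the paragraph preceding the proposition, that for an edge $l = h(l) \to t(l)$ of $G'$ the line-energization time equals the node-energization time of its tail, $E_l = E_{t(l)}$, and that a node of $G'$ is energized precisely when it first becomes connected to the (unique) source supernode through repaired lines.

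\medskip

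For the lower bound, fix any $i$ with $i \preceq_S j$. If $i = j$, then since line $j$ cannot be energized before it is itself repaired we have $E_j \ge C_j = C_i$. If $i \prec_S j$ strictly, then $i$ lies on the unique path in $G'$ from the source supernode to the tail of $j$; in particular $t(i)$ is an ancestor of (or equal to) $t(j)$, so connectivity of $t(j)$ to the source forces connectivity of $t(i)$ to the source, giving $E_j = E_{t(j)} \ge E_{t(i)} = E_i \ge C_i$, the last step because a line must be repaired before it is energized. Taking the maximum over all such $i$ yields $E_j \ge \max_{i \preceq_S j} C_i$.

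\medskip

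For the upper bound, let $t^\star = \max_{i \preceq_S j} C_i$; I claim that by time $t^\star$ the line $j$ is energized. Consider the unique source-to-$t(j)$ path in the radial graph $G'$; its edges are exactly the lines $i$ with $i \preceq_S j$. By definition of $t^\star$, every one of these lines has $C_i \le t^\star$, i.e.\ the entire path has been repaired by time $t^\star$. Hence at time $t^\star$ there is an intact path of repaired lines from the source supernode to $t(j)$, so $t(j)$ is energized, and therefore $E_j = E_{t(j)} \le t^\star$. Combining the two bounds gives $E_j = \max_{i \preceq_S j} C_i$.

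\medskip

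The only delicate point — and the step I would be most careful about — is the clean identification of the soft-precedence ancestors of $j$ with the edge set of the unique source-to-$t(j)$ path in $G'$. This relies on two structural facts that should be spelled out: radiality of $G'$ (inherited from the radial topology of $G$), which guarantees that this path is unique, and the construction of $P$ from $G'$, under which an edge $i \to j$ of $P$ corresponds to $t(i) = h(j)$ in $G'$, so that transitively $i \preceq_S j$ iff $i$ is an edge of the source-to-$t(j)$ path. Once this correspondence is in place, both inequalities reduce to the elementary observation that a node is energized exactly when all lines on its unique source path have been repaired, i.e.\ at the maximum of their completion times.
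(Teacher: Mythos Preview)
The paper does not actually supply a proof of this proposition; it is stated immediately after the discussion defining soft precedence and the identification $E_l = E_{t(l)}$, and the text moves on directly to the complexity analysis. In other words, the authors treat the identity as a definitional consequence of the preceding paragraph rather than as something requiring a separate argument.

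Your proof is correct and provides exactly the justification the paper leaves implicit. The two-inequality structure is the natural one, and the crucial structural step you isolate --- that $\{\,i : i \preceq_S j\,\}$ coincides with the edge set of the unique source-to-$t(j)$ path in the radial graph $G'$ --- is precisely what makes both directions go through. Your final paragraph flags the right dependencies (radiality of $G'$, and the construction of $P$ from $G'$ via the correspondence $t(i)=h(j)$ for edges $i\to j$ of $P$). Since the paper offers no argument of its own here, there is nothing to compare approaches against; your write-up simply makes explicit what the authors took as evident.
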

So far, we have modeled the problem of scheduling post-disaster repairs in electricity distribution networks as a parallel machine scheduling with \emph{outtree soft precedence constraints} in order to minimize the total weighted energization time, or equivalently, $P \vert outtree \text{ } soft \text{ } prec \vert \sum w_j E_j$, following Graham's notation in~\cite{graham1979optimization}.
\subsection{Complexity Analysis}
In this section, we study the complexity of the scheduling problem $P \vert outtree \text{ } soft \text{ } prec \vert \sum w_j E_j$ and show that it is at least strongly $\mathcal{NP}$-hard.
\begin{theorem}
The problem of scheduling post-disaster repairs in electricity distribution networks is at least strongly $\mathcal{NP}$-hard.
\end{theorem}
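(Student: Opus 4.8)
The plan is to establish strong $\mathcal{NP}$-hardness by a polynomial reduction from \textsc{3-partition}, which is strongly $\mathcal{NP}$-complete. The key observation is that when the radial network is a star centered at the source, no two damaged lines lie on a common source-to-sink path, so the soft precedence graph $P$ carries no constraints; by the Proposition above this forces $E_l = C_l$ for every damaged line $l$, and the scheduling problem collapses to $P \vert\vert \sum w_j C_j$ on $m$ identical machines. Choosing each line weight equal to its repair time then turns the scheduling question into a pure load-balancing question, which is exactly \textsc{3-partition}.

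Concretely, I would start from an instance of \textsc{3-partition}: positive integers $a_1,\dots,a_{3n}$ with $\sum_i a_i = nB$ and $B/4 < a_i < B/2$ for all $i$ (the size restriction is standard and forces every part to contain exactly three items), asking whether the index set splits into $n$ triples each summing to $B$. I would build the distribution network $G$ with a single source $s$, sink nodes $d_1,\dots,d_{3n}$, and one edge $l_j = s\,d_j$ for each $j$; all $3n$ edges are declared damaged, with repair time $p_{l_j}=a_j$ and sink weight $w_{d_j}=a_j$ (hence $w_{l_j}=a_j$), and the number of repair crews is set to $m=n$. This graph is radial, the construction is clearly polynomial, and all numbers are polynomially bounded in the (unary) size of the \textsc{3-partition} instance, so a hardness reduction here yields \emph{strong} $\mathcal{NP}$-hardness rather than merely ordinary $\mathcal{NP}$-hardness.

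Next I would establish the cost identity. Since every damaged edge is incident to the source, $G'$ is a star and $P$ has no edges, so $E_{l_j}=C_{l_j}$ and the objective is $\sum_j a_j C_{l_j}$. If crew $k$ is assigned the line set $S_k$ with total load $L_k=\sum_{l_j\in S_k}a_j$, then because each line's weight equals its repair time the contribution of $S_k$ is independent of the processing order within that crew and equals $\tfrac12\big(L_k^2+\sum_{l_j\in S_k}a_j^2\big)$; summing over crews gives total cost $\tfrac12\sum_j a_j^2 + \tfrac12\sum_{k=1}^{n} L_k^2$. Since $\sum_k L_k = nB$ is fixed, convexity gives $\sum_k L_k^2 \ge nB^2$ with equality iff $L_k=B$ for every $k$. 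Hence the optimal schedule has cost at most $K:=\tfrac12\sum_j a_j^2 + \tfrac12 nB^2$ if and only if the loads can be perfectly balanced, i.e.\ if and only if the \textsc{3-partition} instance is a yes-instance, which completes the reduction.

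The part that needs the most care is the first step: verifying that the star construction really produces an instance with no active soft precedence constraints, so that $E_{l_j}=C_{l_j}$ and the general problem genuinely restricts to $P\vert\vert\sum w_j C_j$. After that, the only substantive point is the order-independence of $\sum a_j C_{l_j}$ within a single crew when weights equal processing times -- the observation that collapses the scheduling problem to number partitioning -- together with the routine convexity argument; neither is difficult. One could alternatively argue more briefly that $P\vert\vert\sum w_jC_j$ is itself a special case and is known to be strongly $\mathcal{NP}$-hard, but carrying out the explicit \textsc{3-partition} reduction keeps the argument self-contained.
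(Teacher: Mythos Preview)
Your proof is correct and rests on the same key idea as the paper's: on a star network centered at the source, the soft precedence graph $P$ has no edges, so $E_j = C_j$ for every damaged line and the problem collapses to $P \,\vert\vert\, \sum w_j C_j$ on $m$ identical machines. The paper stops at that point and simply cites the known strong $\mathcal{NP}$-hardness of $P \,\vert\vert\, \sum w_j C_j$, whereas you push one level further and spell out the classical \textsc{3-partition} reduction for that problem (setting $w_j = p_j$ so that each crew's contribution is order-independent and the objective becomes $\tfrac12\sum_j a_j^2 + \tfrac12\sum_k L_k^2$, minimized exactly when the loads balance). Your version is therefore self-contained, while the paper's is shorter but leans on an external reference; you yourself note this shortcut as an alternative in your final paragraph.
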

\begin{proof}
We show this problem is at least strongly $\mathcal{NP}$-hard using a reduction from the well-known identical parallel machine scheduling problem $P \ \vert \vert \ \sum w_{j}C_{j}$ defined as follows,

$P \ \vert \vert \ \sum w_{j}C_{j}$: Given a set of jobs $J$ in which $j$ has processing time $p_j$ and weight $w_j$, find a parallel machine schedule that minimizes the total weighted completion time $\sum w_{j}C_{j}$, where $C_j$ is the time when job $j$ finishes. $P \ \vert \vert \ \sum w_{j}C_{j}$ is strongly $\mathcal{NP}$-hard~\cite{pinedo2012scheduling, brucker2007scheduling}.

Given an instance of $P \ \vert \vert \ \sum w_{j}C_{j}$ defined as above, construct a star network $G_S$ with a source and $\vert J \vert$ sinks. Each sink $j$ has a weight $w_j$ and the line between the source and sink $j$ has a repair time of $p_j$. Whenever a line is repaired, the corresponding sink can be energized. Therefore the energization time of sink $j$ is equal to the completion time of line $j$. If one could solve the problem of scheduling post-disaster repairs in electricity distribution networks to optimality, then one can solve the problem in $G_S$ optimally and equivalently solve $P \ \vert \vert \ \sum w_{j}C_
{j}$.
\end{proof}
\section{Integer Linear Programming (ILP) formulation}
\label{sec:ILP}
With an additional assumption in this section that all repair times are integers, we model the post-disaster repair scheduling problem using time-indexed decision variables (see ), $x_{l}^{t}$, where $x_{l}^{t} = 1$ if line $l$ is being repaired by a crew at time period $t$. Variable $y_{l}^{t}$ denotes the repair status of line $l$ where $y_{l}^{t} = 1$ if the repair is done by the end of time period $t - 1$ and ready to energize at time period $t$. Finally, $u_{i}^{t} = 1$ if node $i$ is energized at time period $t$. Let $T$ denote the time horizon for the restoration efforts. Although we cannot know $T$ exactly until the problem is solved, a conservative estimate should work. Since $T_{i} = \sum_{t=1}^{T} (1 - u_{i}^{t})$ by discretization, the objective function of eqn.~\ref{eqn:harm} can be rewritten as:
\begin{equation}
\text{minimize} \ \ \ \sum_{t=1}^{T}\sum_{i \in N}w_{i}(1 - u_{i}^{t})
\label{eqn:obj}
\end{equation}
This problem is to be solved subject to two sets of constraints: (i) repair constraints and (ii) network flow constraints, which are discussed next. We mention in passing that the above time-indexed (ILP) formulation provides a strong relaxation of the original problem \cite{nurre2012restoring} and allows for modeling of different scheduling objectives without changing the structure of the model and the underlying algorithm.
\subsection{Repair Constraints}
Repair constraints model the behavior of repair crews and how they affect the status of the damaged lines and the sink nodes that must be re-energized. The three constraints below are used to initialize the binary status variables $y_{l}^{t}$ and $u_{i}^{t}$. Eqn.~\ref{eqn:inityd} forces $y_{l}^{t}=0$ for all lines which are damaged initially (i.e., at time $t=0$) while eqn. \ref{eqn:inityi} sets $y_{l}^{t}=1$ for all lines which are intact. Eqn.~\ref{eqn:initus} forces the status of all source nodes, which are initially energized, to be equal to $1$ for all time periods.
\begin{alignat}{2}
  y_{l}^{1} = 0, \quad &\forall l \in L^{D}
\label{eqn:inityd} \\
  y_{l}^{t} = 1, \quad &\forall l \in L^{I}, \  \forall t \in [1,T] \label{eqn:inityi} \\
  u_{i}^{t} = 1, \quad &\forall i \in S, \ \ \forall t \in [1,T] \label{eqn:initus}
\end{alignat}
where $T$ is the restoration time horizon. The next set of constraints is associated with the binary variables $x_{l}^{t}$. Eqn.~\ref{eqn:work} constrains the maximum number of crews working on damaged lines at any time period $t$ to be equal to $m$, where $m$ is the number of crews available.
\begin{equation}
\sum_{l \in L^D}x_{l}^{t} \leq m, \ \forall t \in [1,T]
\label{eqn:work}
\end{equation}
Observe that, compared to the formulation in \cite{nurre2012restoring}, there are no crew indices in our model. Since these indices are completely arbitrary, the number of feasible solutions can increase in crew indexed formulations, leading to enhanced computation time. For example, consider the simple network $i \rightarrow j \rightarrow k \rightarrow l$, where node $i$ is the source and all edges require a repair time of $5$ time units. If $2$ crews are available, suppose the optimal repair schedule is: `assign team $1$ to $i \rightarrow j$ at time $t=0$, team $2$ to $j \rightarrow k$ at $t=0$, and team $1$ to $k \rightarrow l$' at $t=5$.  Clearly, one possible equivalent solution conveying the same repair schedule and yielding the same cost, is: `assign team $2$ to $i \rightarrow j$ at $t=0$, team $1$ to $j \rightarrow k$ at $t=0$, and team $1$ to $k \rightarrow l$ at $t=5$'. In general, formulations without explicit crew indices may lead to a reduction in the size of the feasible solution set.
%
Although the optimal repair sequences obtained from such formulations do not natively produce the work assignments to the different crews, this is not an issue in practice because operators can choose to let a crew work on a line until the job is complete and assign the next repair job in the sequence to the next available crew (the first $m$ jobs in the optimal repair schedule can be assigned arbitrarily to the $m$ crews).
%
%

Finally, constraint eqn.~\ref{eqn:xy} formalizes the relationship between variables $x_{l}^{t}$ and $y_{l}^{t}$.
%
It mandates that $y_l^t$ cannot be set to $1$ unless at least $p_l$ number of $x_l^{\tau}$'s, $\tau \in [1, t - 1]$, are equal to $1$, where $p_l$ is the repair time of line $l$.
\begin{equation}
y_{l}^{t} \leq \frac{1}{p_{l}}\sum_{\tau=1}^{t-1} x_{l}^{\tau}, \ \forall l \in L^D, \ \forall t \in [1,T]
\label{eqn:xy}
\end{equation}
While we do not explicitly require that a crew may not leave its current job unfinished and take up a different job, it is obvious that such a scenario cannot be part of an optimal repair schedule.
%
%
\subsection{Network flow constraints}
\label{sec_netFlowConstraints}
%
We use a modified form of standard flow equations to simplify power flow constraints. Specifically, we require that the flows, originating from the source nodes (eqn.~\ref{eqn:source}), travel through lines which have already been repaired (eqn.~\ref{eqn:line}). Once a sink node receives a flow, it can be energized (eqn.~\ref{eqn:energize}).
%
\begin{alignat}{3}
  &\sum_{l \in \delta_{G}^{-}(i)} f_{l}^{t} \geq 0, \quad &\forall t \in [1,T], \ &\forall i \in S \label{eqn:source} \\
  &-M \times y_{l}^{t} \leq f_{l}^{t} \leq M \times y_{l}^{t}, \quad &\forall t \in [1,T], \ &\forall l \in L \label{eqn:line} \\
  %
  %
  &u_{i}^{t} \leq \sum_{l \in \delta_{G}^{+}(i)} f_{l}^t - \sum_{l \in \delta_{G}^{-}(i)} f_{l}^t, \quad &\forall t \in [1,T], &\forall i \in D
\label{eqn:energize}
\end{alignat}
%
%
%
%
In eqn. \ref{eqn:line}, $M$ is a suitably large constant, which, in practice, can be set equal to the number of sink nodes, $M = \vert D \vert$. In eqn. \ref{eqn:energize}, $\delta_{G}^{+}(i)$ and $\delta_{G}^{-}(i)$ denote the sets of lines on which power flows into and out of node $i$ in $G$ respectively.
\subsection{Valid inequalities}
Valid inequalities typically reduce the computing time and strengthen the bounds provided by the LP relaxation of an ILP formulation. We present the following shortest repair time path inequalities, which resemble the ones in \cite{nurre2012restoring}. A node $i$ cannot be energized until all the lines between the source $s$ and node $i$ are repaired. Since the lower bound to finish all the associated repairs is $\left \lfloor{SRTP_{i}/m}\right \rfloor$, where $m$ denotes the number of crews available and $SRTP_{i}$ denotes the shortest repair time path between $s$ and $i$, the following inequality is valid:
\begin{equation}
\sum_{t=1}^{\left \lfloor{SRTP_{i}/m}\right \rfloor - 1} u_{i}^{t} = 0, \quad \forall i \in N
\label{eqn:validineq}
\end{equation}
To summarize, the multi-crew distribution system post-disaster repair problem can be formulated as:
\begin{align}
\underset{}{\text{minimize}} \quad &\text{eqn.}~\ref{eqn:obj} \nonumber\\
\text{subject to} \quad
& \text{eqns.}~\ref{eqn:inityd} \ \scriptsize{\sim} \ \ref{eqn:validineq}
\end{align}
%
%
%
%
\section{List scheduling algorithms based on linear relaxation}
A majority of the approximation algorithms used for scheduling is derived from linear relaxations of ILP models, based on the scheduling polyhedra of completion vectors developed in \cite{queyranne1993structure} and \cite{schulz1996polytopes}. We briefly restate the definition of scheduling polyhedra and then introduce a linear relaxation based list scheduling algorithm followed by a worst case analysis of the algorithm.
\subsection{Linear relaxation of scheduling with soft precedence constraints}
\label{sec:LPrelax_softPrec}
%
%
A set of valid inequalities for $m$ identical parallel machine scheduling was presented in~\cite{schulz1996polytopes}:
\begin{align}
\sum_{j \in A} p_j C_j \geq f(A) := \frac{1}{2m}\left(\sum_{j \in A} p_j\right)^2 + \frac{1}{2} \, \sum_{j \in A} p_j^2 \,\;\quad \forall A \subset N
\label{eqn:validineqparallelschulz}
\end{align}
\begin{theorem}[\cite{schulz1996polytopes}]
The completion time vector $C$ of every feasible schedule on $m$ identical parallel machines satisfies inequalities~(\ref{eqn:validineqparallelschulz}).
\end{theorem}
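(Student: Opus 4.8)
The plan is to recast inequality~(\ref{eqn:validineqparallelschulz}) in terms of the \emph{mean busy times} of the jobs and then reduce it to an elementary one-dimensional estimate. First I would set $P_A := \sum_{j \in A} p_j$ and, using the identity $p_j C_j = p_j(C_j - \tfrac12 p_j) + \tfrac12 p_j^2$, observe that~(\ref{eqn:validineqparallelschulz}) is equivalent to
\begin{equation}
\sum_{j \in A} p_j \left( C_j - \tfrac{1}{2}\, p_j \right) \;\ge\; \frac{P_A^2}{2m}. \label{eqn:mbtform}
\end{equation}
So it is enough to establish~(\ref{eqn:mbtform}) for an arbitrary feasible schedule on $m$ identical parallel machines.

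Next I would introduce, for each job $j$, the set $I_j$ of time instants at which $j$ is in process; $I_j$ has Lebesgue measure $p_j$ and lies in $(-\infty, C_j]$, and I would set $\bar C_j := \tfrac{1}{p_j}\int_{I_j} t\,dt$, the mean busy time of $j$. Pushing the processing mass of $j$ as late as possible, i.e. onto the interval $[C_j - p_j, C_j]$, only increases this integral, so $\bar C_j \le C_j - \tfrac12 p_j$, with equality exactly when $j$ runs without interruption; hence $\sum_{j \in A} p_j(C_j - \tfrac12 p_j) \ge \sum_{j \in A} p_j \bar C_j$. The point of the reformulation is that $\sum_{j \in A} p_j \bar C_j = \sum_{j \in A}\int_{I_j} t\,dt = \int_0^\infty t\,\nu(t)\,dt$, where $\nu(t)$ denotes the number of jobs of $A$ being processed at time $t$. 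I would then invoke the only two structural facts needed: $\nu(t) \le m$ for all $t$, because there are only $m$ machines, and $\int_0^\infty \nu(t)\,dt = P_A$, because job $j$ contributes exactly $p_j$ to this integral.

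Finally I would minimize $\int_0^\infty t\,\nu(t)\,dt$ over all functions $\nu$ with $0 \le \nu \le m$ and $\int \nu = P_A$. The integral is smallest when the weight of $\nu$ is concentrated as near $t = 0$ as possible, i.e. for $\nu^\star = m\cdot\mathbf{1}_{[0,\,P_A/m]}$; this follows from a one-line exchange estimate, since $\nu - \nu^\star \le 0$ on $[0, P_A/m]$ and $\nu - \nu^\star \ge 0$ on $(P_A/m, \infty)$ while $\int(\nu - \nu^\star) = 0$, so $\int_0^\infty t\,(\nu - \nu^\star)\,dt \ge \tfrac{P_A}{m}\int_0^\infty (\nu - \nu^\star)\,dt = 0$. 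Therefore $\int_0^\infty t\,\nu(t)\,dt \ge \int_0^{P_A/m} t\cdot m\,dt = P_A^2/(2m)$, which is precisely the right-hand side of~(\ref{eqn:mbtform}); chaining the inequalities yields~(\ref{eqn:validineqparallelschulz}). I expect the one genuinely delicate point to be the step $\bar C_j \le C_j - \tfrac12 p_j$: it is what produces the $\tfrac12\sum_{j \in A} p_j^2$ term — a crude prefix-sum bound on completion times alone only delivers the weaker constant $\tfrac{1}{2m}$ in front of $\sum p_j^2$ — and it is also exactly the place where the argument continues to work even if preemption is allowed, since the bound holds for any feasible schedule.
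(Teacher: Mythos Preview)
The paper does not actually prove this theorem: it is stated with a citation to Schulz~\cite{schulz1996polytopes} and used as a black box for the LP relaxation in Section~\ref{sec:LPrelax_softPrec}. So there is no in-paper proof to compare against directly; the relevant comparison is with the standard argument in the cited reference.

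Your proof is correct. Each step checks out: the algebraic reduction to~(\ref{eqn:mbtform}) is immediate; the bound $\bar C_j \le C_j - \tfrac12 p_j$ follows because among measurable subsets of $[0,C_j]$ of measure $p_j$ the integral $\int_{I_j} t\,dt$ is maximized by the rightmost interval; the identity $\sum_{j\in A} p_j\bar C_j = \int_0^\infty t\,\nu(t)\,dt$ is Fubini; and the exchange inequality showing that $\nu^\star = m\cdot\mathbf 1_{[0,P_A/m]}$ minimizes $\int t\,\nu$ over $\{0\le\nu\le m,\ \int\nu = P_A\}$ is exactly as you wrote it.

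Your route differs from Schulz's original one. The argument in~\cite{schulz1996polytopes} is purely combinatorial and machine-by-machine: restrict to the jobs of $A$ on machine $k$, use the single-machine inequality $\sum_i p_{k,i}C_{k,i}\ge \tfrac12\bigl(\sum_i p_{k,i}\bigr)^2 + \tfrac12\sum_i p_{k,i}^2$ (which comes from $C_{k,i}\ge \sum_{i'\le i} p_{k,i'}$), sum over $k$, and then apply convexity of $x\mapsto x^2$ (equivalently Cauchy--Schwarz) to get $\sum_k P_{A_k}^2 \ge P_A^2/m$. That approach is shorter and uses nothing beyond finite sums, but it relies on jobs being assigned to machines non-preemptively. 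Your mean-busy-time argument trades the per-machine decomposition for a single integral inequality; what it buys you, as you correctly point out, is that the proof goes through verbatim for preemptive (and even fractional) schedules, since the only structural facts used are $\nu\le m$ and $\int\nu = P_A$. Both approaches yield the same constants, and your identification of the step $\bar C_j \le C_j - \tfrac12 p_j$ as the source of the sharp $\tfrac12\sum p_j^2$ term is exactly right.
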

The objective of the post-disaster repair and restoration is to minimize the harm, quantified as the total weighted energization time. With the previously defined soft precedence constraints and the valid inequalities for parallel machine scheduling, we propose the following LP relaxation:
\begin{align}
\underset{C, E}{\text{minimize}} \quad &\sum_{j \in L^D} w_j E_j \\
\text{subject to} \quad &C_j \geq p_j, \; \, \forall j \in L^D \label{eqn:cnstr1}\\
& E_j \geq C_j, \; \forall j \in L^D \label{eqn:cnstr2}\\
& E_j \geq E_i, \; \forall (i \rightarrow j) \in P \label{eqn:cnstr3}\\
& \sum_{j \in A} p_j C_j \geq \frac{1}{2m}\left(\sum_{j \in A} p_j\right)^2 + \frac{1}{2} \, \sum_{j \in A} p_j^2, \; \ \forall A \subset L^D \label{eqn:cnstr4}
\end{align}
where $P$ is the soft precedence graph discussed in Section~\ref{sec:problemForm} (see also Fig.~\ref{fig:sim_dag}). Eqn.~\ref{eqn:cnstr1} constrains the completion time of any damaged line to be lower bounded by its repair time, eqn.~\ref{eqn:cnstr2} ensures that any line cannot be energized until it has been repaired, eqn.~\ref{eqn:cnstr3} models the soft precedence constraints, and eqn.~\ref{eqn:cnstr4} characterizes the scheduling polyhedron.

The above formulation can be simplified by recognizing that the $C_j$'s are redundant intermediate variables. Combining eqns.~\ref{eqn:cnstr2} and ~\ref{eqn:cnstr4}, we have:
\begin{equation}
\sum_{j \in A} p_j E_j \ \geq \ \sum_{j \in A} p_j C_j \ \geq \ \frac{1}{2m} \left(\sum_{j \in A} p_j\right)^2 + \frac{1}{2} \, \sum_{j \in A} p_j^2, \quad \forall A \subset L^D
\end{equation}
which indicates that the vector of $E_j$'s satisfies the same valid inequalities as the vector of $C_j$'s. After some simple algebra, the LP-relaxation can be reduced to:
\begin{align}
\underset{E}{\text{minimize}} \quad &\sum_{j \in L^D} w_j E_j \\
\text{subject to} \quad
& E_j \geq p_j, \; \forall j \in L^D \label{eqn:cnstr21}\\
& E_j \geq E_i, \; \forall (i \rightarrow j) \in P \label{eqn:cnstr22}\\
& \sum_{j \in A} p_j E_j \geq \frac{1}{2m}\left(\sum_{j \in A} p_j\right)^2 + \frac{1}{2} \, \sum_{j \in A} p_j^2, \; \ \forall A \subset L^D \label{eqn:cnstr23}
\end{align}
We note that although there are exponentially many constraints in the above model, the separation problem for these inequalities can be solved in polynomial time using the ellipsoid method as shown in~\cite{schulz1996polytopes}.
\subsection{LP-based approximation algorithm}
List scheduling algorithms, which are among the simplest and most commonly used approximate solution methods for parallel machine scheduling problems \cite{queyranne2006approximation}, assign the job at the top of a priority list to whichever machine is idle first. An LP relaxation provides a good insight into the priorities of jobs and has been widely applied to scheduling with hard precedence constraints. We adopt a similar approach in this paper. Algorithm \ref{alg:LP_listSched}, based on a sorted list of the LP midpoints, summarizes our proposed approach. We now develop an approximation bound for Algorithm \ref{alg:LP_listSched}.
\begin{algorithm}
\caption{Algorithm for single/multiple crew repair scheduling in distribution networks, based on LP midpoints}
\label{alg:LP_listSched}
\begin{algorithmic}
\STATE \textit{Let $E^{LP}$ denote any feasible solution to the constraint eqns. \ref{eqn:cnstr21} - \ref{eqn:cnstr23}. Define the LP mid points to be $M_{j}^{LP} := E_{j}^{LP} - p_{j}/2, \ \forall j \in L^D$. Create a job priority list by sorting the $M_{j}^{LP}$'s in an ascending order. Whenever a crew is free, assign to it the next job from the priority list. The first $m$ jobs in the list are assigned arbitrarily to the $m$ crews. }
\end{algorithmic}
\end{algorithm}
%
%
\begin{proposition}
Let $E_j^H$ denote the energization time respectively of line $j$ in the schedule constructed by Algorithm \ref{alg:LP_listSched}. Then the following must hold,
\begin{align}
E_j^{H} \leq 2 E_j^{LP}, \quad \forall j \in L^D \label{eqn:ET}
\end{align}
\label{prop_firstLPSched}
\end{proposition}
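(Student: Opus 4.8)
The plan is to show that running a list scheduling algorithm on the priority list obtained by sorting the LP midpoints $M_j^{LP}$ yields an energization time that is at most twice the LP energization time for every line. The key is to bound the completion time $C_j^H$ of line $j$ in the heuristic schedule, and then relate $E_j^H$ (the max of completion times over soft-predecessors of $j$) back to $E_j^{LP}$. First I would pick an arbitrary line $j$ and let $k$ be the soft-predecessor of $j$ (including $j$ itself) that attains $E_j^H = C_k^H$; since the priority list respects the LP midpoint order and the LP solution satisfies the soft precedence constraints $E_i^{LP} \le E_j^{LP}$ along edges of $P$, monotonicity should give $M_k^{LP} \le M_j^{LP}$ and $E_k^{LP} \le E_j^{LP}$, so it suffices to bound $C_k^H \le 2 E_k^{LP}$ for that single ``critical'' line $k$.

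Next I would analyze the list schedule locally at line $k$. Let $A$ be the set of jobs that appear no later than $k$ in the priority list, i.e., all jobs $i$ with $M_i^{LP} \le M_k^{LP}$. Because the algorithm never idles a crew while an unassigned job is available, the standard list scheduling argument gives $C_k^H \le \frac{1}{m}\sum_{i \in A, i \ne k} p_i + p_k$, or more cleanly $C_k^H \le \frac{1}{m}\sum_{i \in A} p_i + \bigl(1 - \tfrac{1}{m}\bigr)p_k \le \frac{1}{m}\sum_{i \in A}p_i + p_k$. So the whole proof reduces to showing $\frac{1}{m}\sum_{i \in A}p_i + p_k \le 2 E_k^{LP}$, or equivalently $\frac{1}{2m}\sum_{i\in A}p_i + \tfrac12 p_k \le E_k^{LP}$.

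To get that bound, I would invoke the scheduling-polyhedron inequality~\eqref{eqn:cnstr23} applied to the set $A$: $\sum_{i\in A}p_i E_i^{LP} \ge \frac{1}{2m}\bigl(\sum_{i\in A}p_i\bigr)^2 + \tfrac12\sum_{i\in A}p_i^2$. Since $k \in A$ and $k$ has the largest midpoint in $A$, I expect to argue $E_i^{LP} \le$ something controlled by $E_k^{LP}$ is the wrong direction; instead the right move is to use the midpoint ordering: for $i \in A$, $M_i^{LP} \le M_k^{LP}$ means $E_i^{LP} - p_i/2 \le E_k^{LP} - p_k/2 \le E_k^{LP}$, hence $E_i^{LP} \le E_k^{LP} + p_i/2$. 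Substituting into the left side: $\sum_{i\in A}p_i E_i^{LP} \le E_k^{LP}\sum_{i\in A}p_i + \tfrac12\sum_{i\in A}p_i^2$. Combining with~\eqref{eqn:cnstr23} gives $E_k^{LP}\sum_{i\in A}p_i + \tfrac12\sum_{i\in A}p_i^2 \ge \frac{1}{2m}\bigl(\sum_{i\in A}p_i\bigr)^2 + \tfrac12\sum_{i\in A}p_i^2$, so after cancelling the $\tfrac12\sum p_i^2$ terms and dividing by $\sum_{i\in A}p_i$ (which is positive), $E_k^{LP} \ge \frac{1}{2m}\sum_{i\in A}p_i$. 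Together with $E_k^{LP} \ge p_k$ from~\eqref{eqn:cnstr21}, I get $2E_k^{LP} \ge \frac{1}{m}\sum_{i\in A}p_i + p_k \ge C_k^H = E_j^H$, which combined with $E_k^{LP} \le E_j^{LP}$ finishes the claim.

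The main obstacle I anticipate is being careful with the definition of $A$ and ties in the midpoint ordering, and making sure the list-scheduling bound $C_k^H \le \frac{1}{m}\sum_{i\in A}p_i + p_k$ is legitimate — in particular that every job counted in $\sum_{i \in A}p_i$ is indeed either running or completed before $k$ starts, which relies on the fact that the list is processed in midpoint order and crews are never idle unnecessarily. A secondary subtlety is the soft-precedence step: I must confirm that the critical predecessor $k$ of $j$ really does satisfy $M_k^{LP} \le M_j^{LP}$, which follows from $E_k^{LP} \le E_j^{LP}$ only if $p_k$ is not too large relative to $p_j$; if that fails one may instead need $k$ itself to be the relevant index, but since $E_j^{LP} \ge E_k^{LP}$ holds unconditionally along $P$-edges, the final inequality $E_j^H = C_k^H \le 2E_k^{LP} \le 2E_j^{LP}$ goes through regardless of the midpoint comparison between $k$ and $j$ — the midpoint ordering is only needed \emph{within} $A$ for the polyhedral estimate.
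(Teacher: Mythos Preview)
Your overall strategy is essentially the paper's: bound the list-scheduling completion time of each job via the polyhedral inequality applied to the set of jobs with smaller LP midpoints, then pass to energization times via the soft precedence relation. The structure is right, but there is a genuine arithmetic gap in the middle.

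From the midpoint ordering $M_i^{LP}\le M_k^{LP}$ you write $E_i^{LP}\le E_k^{LP}+p_i/2$, discarding the $-p_k/2$. Substituting this into the polyhedral inequality and cancelling the $\tfrac12\sum p_i^2$ terms gives only $E_k^{LP}\ge \tfrac{1}{2m}\sum_{i\in A}p_i$. From this, $2E_k^{LP}\ge \tfrac{1}{m}\sum_{i\in A}p_i$, but your claimed inequality $2E_k^{LP}\ge \tfrac{1}{m}\sum_{i\in A}p_i+p_k$ does \emph{not} follow: adding $E_k^{LP}\ge p_k$ yields $2E_k^{LP}\ge \tfrac{1}{2m}\sum_{i\in A}p_i+p_k$, which is still short by $\tfrac{1}{2m}\sum_{i\in A}p_i$; getting the full $\tfrac{1}{m}\sum p_i+p_k$ from those two bounds would require $3E_k^{LP}$ on the left, not $2E_k^{LP}$.

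The fix is exactly what the paper does: do not throw away the $-p_k/2$. Keep the inequality in midpoint form, i.e., rewrite \eqref{eqn:cnstr23} as $\sum_{i\in A}p_iM_i^{LP}\ge \tfrac{1}{2m}\bigl(\sum_{i\in A}p_i\bigr)^2$ and use $M_i^{LP}\le M_k^{LP}$ for $i\in A$ to obtain $M_k^{LP}\ge \tfrac{1}{2m}\sum_{i\in A}p_i$. Then $C_k^H\le \tfrac{1}{m}\sum_{i\in A\setminus\{k\}}p_i+p_k\le 2M_k^{LP}+p_k=2E_k^{LP}$ directly, and the constraint $E_k^{LP}\ge p_k$ is never needed. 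With this correction your argument is complete and matches the paper's proof, including the final soft-precedence step $E_j^H=\max_{i\preceq_S j}C_i^H\le 2\max_{i\preceq_S j}E_i^{LP}=2E_j^{LP}$.
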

\begin{proof}
Let $S_j^H$, $C_j^H$ and $E_j^H$ denote the start time, completion time respectively of some line $j$ in the schedule constructed by Algorithm \ref{alg:LP_listSched}.
Define $M := \left[M_j^{LP}: j = 1,2, \dots , \left\vert L^D \right\vert\right]$. Let $\tilde{M}$ denote $M$ sorted in ascending order, $\tilde{\mathcal{I}}_j$ denote the position of some line $j \in L^D$ in $\tilde{M}$, and $\left\{ k: \tilde{\mathcal{I}}_k \leq \tilde{\mathcal{I}}_j, k \neq j \right\} := R$ denote the set of jobs whose LP midpoints are upper bounded by $M_j^{LP}$. First, we claim that $S_j^{H} \leq \frac{1}{m} \, \sum_{i \in R} p_i$. To see why, split the set $R$ into $m$ subsets, corresponding to the schedules of the $m$ crews, i.e., $R = \bigcup_{k=1}^{m}R^k$. Since job $j$ is assigned to the first idle crew and repairs commence immediately, we have:
%
\begin{align}
  S_j^H = \text{min} \left\{\sum_{i \in R^k} p_i: k = 1,2, \dots m \right\} \leq\frac{1}{m} \, \sum_{k=1}^{m} \sum_{i \in R^k} p_i = \frac{1}{m} \, \sum_{i \in R} p_i \, ,
  \label{lemma4_impeq1}
\end{align}
where the inequality follows from the fact that the minimum of a set of positive numbers is upper bounded by the mean. Next, noting that $M_{j}^{LP} = E_{j}^{LP} - p_{j}/2$, we rewrite eqn.~\ref{eqn:cnstr23} as follows:
\begin{equation}
\sum_{j \in A} p_j M_j^{LP} \geq \frac{1}{2m}\left(\sum_{j \in A} p_j\right)^2, \; \forall A \subset L^D
\label{lemma4_impeq2}
\end{equation}
%
Now, letting $A = R$,  we have:
\begin{equation}
\left(\sum_{i \in R} p_i\right) M_j^{LP} \ \geq \ \sum_{i \in R} p_i M_i^{LP} \ \geq \ \frac{1}{2m}\left(\sum_{i \in R} p_i\right)^2 \, , 
\label{lemma4_impeq3}
\end{equation}
where the first inequality follows from the fact that $M_j^{LP} \geq M_i^{LP}$ for any $i \in R$. Combining eqns. \ref{lemma4_impeq1} and \ref{lemma4_impeq3}, it follows that $S_j^{H} \leq 2 M_j^{LP}$. Consequently, $C_j^H = S_j^H + p_j \leq 2 M_j^{LP} + p_j = 2 E_j^{LP}$. Then,
\begin{align}
E_j^H = \underset{i \preceq_S j}{\max} \, C_i^H \leq \underset{i \preceq j}{\max} \, 2E_i^{LP} = 2 E_j^{LP} ,
\end{align}
where the last equality follows trivially from the definition of a soft precedence constraint.
\end{proof}
\begin{theorem}
Algorithm \ref{alg:LP_listSched} is a 2-approximation.
\end{theorem}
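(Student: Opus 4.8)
The plan is to bootstrap the per-job bound of Proposition~\ref{prop_firstLPSched} into a bound on the objective, using the fact that the linear program of eqns.~\ref{eqn:cnstr21}--\ref{eqn:cnstr23} is a relaxation of the post-disaster repair scheduling problem. Accordingly, one runs Algorithm~\ref{alg:LP_listSched} with $E^{LP}$ taken to be an \emph{optimal} solution of that LP, and lets $OPT$ denote the optimal value of the scheduling problem.

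The first step is to show $\sum_{j \in L^D} w_j E_j^{LP} \le OPT$. I would take any optimal schedule, with line completion times $C_j^{\star}$ and energization times $E_j^{\star}$, and verify that $E^{\star}$ satisfies eqns.~\ref{eqn:cnstr21}--\ref{eqn:cnstr23}. Eqn.~\ref{eqn:cnstr21} holds because $E_j^{\star} \ge C_j^{\star} \ge p_j$. Eqn.~\ref{eqn:cnstr22} follows from Proposition~1: if $i \prec_S j$ then $\{k : k \preceq_S i\} \subseteq \{k : k \preceq_S j\}$ by transitivity, so $E_j^{\star} = \max_{k \preceq_S j} C_k^{\star} \ge \max_{k \preceq_S i} C_k^{\star} = E_i^{\star}$. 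Eqn.~\ref{eqn:cnstr23} follows by applying Theorem~2 to the completion vector $C^{\star}$ and then using $E_j^{\star} \ge C_j^{\star}$ together with $p_j > 0$, exactly as in the derivation that reduced the LP to its $E$-only form. Hence $E^{\star}$ is LP-feasible, and since $E^{LP}$ is LP-optimal and every $w_j$ is positive, $\sum_j w_j E_j^{LP} \le \sum_j w_j E_j^{\star} = OPT$.

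The second step is immediate from Proposition~\ref{prop_firstLPSched}: the schedule $H$ produced by the algorithm obeys $E_j^{H} \le 2 E_j^{LP}$ for all $j \in L^D$, so multiplying by $w_j > 0$ and summing gives
\[
\sum_{j \in L^D} w_j E_j^{H} \ \le \ 2 \sum_{j \in L^D} w_j E_j^{LP} \ \le \ 2\, OPT .
\]
Because the harm objective $\sum_{n \in N} w_n T_n$ of eqn.~\ref{eqn:harm} is, under the energization-time approximation, exactly $\sum_{n \in D} w_n E_n = \sum_{l \in L^D} w_l E_l$ (each non-source node maps to its unique upstream damaged line, as explained in Section~\ref{subsec:soft}), this is a factor-$2$ guarantee on the quantity the problem seeks to minimize. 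I would close by noting that Algorithm~\ref{alg:LP_listSched} runs in polynomial time: the LP has exponentially many constraints but a polynomial-time separation oracle (Theorem~2 and~\cite{schulz1996polytopes}), hence is solvable by the ellipsoid method, and sorting the midpoints and list-scheduling are trivially polynomial.

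The only step that requires any real care is the first one --- confirming that an optimal (integral) schedule yields a feasible point of the continuous relaxation, which is where Proposition~1 and Theorem~2 are invoked --- together with the cosmetic but necessary identification of the node objective with the line objective. Everything else is bookkeeping layered on top of Proposition~\ref{prop_firstLPSched}, which already carries the substantive estimate $S_j^H \le 2 M_j^{LP}$.
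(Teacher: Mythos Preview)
Your proposal is correct and follows essentially the same approach as the paper: combine the per-line bound $E_j^H \le 2E_j^{LP}$ from Proposition~\ref{prop_firstLPSched} with the relaxation inequality $\sum_j w_j E_j^{LP} \le \sum_j w_j E_j^{*}$. You are in fact more careful than the paper's own proof, which simply asserts the relaxation inequality, whereas you explicitly verify LP-feasibility of $E^{\star}$ via Proposition~1 and Theorem~2 and note that $E^{LP}$ must be taken to be the LP optimum.
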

\begin{proof}
Let $E_j^{*}$ denote the energization time of line $j$ in the optimal schedule. Then, with $E_j^{LP}$ being the solution of the linear relaxation,
\begin{equation}
\sum_{j \in L^D} w_j E_j^{LP} \leq \sum_{j \in L^D} w_j E_j^{*}
\label{eqn:objapprox}
\end{equation}
Finally, from eqns.~\ref{eqn:ET} and \ref{eqn:objapprox}, we have:
\begin{equation}
\sum_{j \in L^D} w_j E_j^{H} \leq 2 \, \sum_{j \in L^D} w_j E_j^{LP} \leq 2 \, \sum_{j \in L^D} w_j E_j^{*}
\end{equation}
\end{proof}
\section{An algorithm for converting the optimal single crew repair sequence to a multi-crew schedule}
\label{sec_convAlgo}
%
%
In practice, many utilities schedule repairs using a priority list \cite{xu2007optimizing}, which leaves much scope for improvement. We analyze the repair and restoration process as it would be done with a single crew because this provides important insights into the general structure of the multi-crew scheduling problem. Subsequently, we provide an algorithm for converting the single crew repair sequence to a multi-crew schedule, which is inspired by similar previous work in~\cite{chekuri2004approximation}, and analyze its worst case performance. Finally, we develop a multi-crew dispatch rule and compare it with the current practices of FirstEnergy Group~\cite{FirstEnergypractice} and Edison Electric Institute~\cite{EEIpractice}.
%
\subsection{Single crew restoration in distribution networks}
We show that this problem is equivalent to $1 \mid outtree \mid \sum w_{j}C_{j}$, which stands for scheduling to minimize the total weighted completion time of $N$ jobs with a single machine under `outtree' precedence constraints.  Outtree precedence constraints require that each job may have at most one predecessor. Given the manner in which we derive the soft precedence (see Section~\ref{sec:problemForm}), it is easy to see that $P$ will indeed follow outtree precedence requirements, i.e. each node in $P$ will have at most one predecessor, as long as the network topology $G$ does not have any cycles. We will show by the following lemma that the soft precedence constraints degenerate to the precedence constraints with one repair team.
\begin{proposition}
Given one repair crew, the optimal schedule in a radial distribution system must follow outtree precedence constraints, the topology of which follows the soft precedence graph $P$.
\label{prop:outtree}
\end{proposition}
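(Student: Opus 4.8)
The plan is to show that for a single crew, any optimal schedule respects the ordinary (hard) precedence constraints induced by $P$, so the soft-precedence problem collapses to $1 \mid outtree \mid \sum w_j C_j$. First I would recall the characterization from the earlier Proposition, namely $E_j = \max_{i \preceq_S j} C_i$, which means the objective $\sum_j w_j E_j$ is a weighted sum of running maxima of completion times along chains of $P$. The key observation is that with one crew the schedule is simply a permutation $\pi$ of the jobs in $L^D$, the completion times are partial sums of processing times, and $E_j$ for a job $j$ equals the largest completion time among $j$ and all its ancestors in $P$.

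The core argument is an exchange/interchange argument. Suppose we have an optimal single-crew schedule in which some job $j$ is processed before one of its $P$-predecessors $i$ (with $i \prec_S j$), and take such a violating pair that is adjacent in the sequence, or more carefully, take $i$ to be a predecessor of $j$ scheduled latest before... actually the cleanest route: among all precedence violations pick one where $j$ comes before $i$, $i \prec_S j$, and $j$ is scheduled immediately before $i$ is impossible since $j$ before $i$; instead pick a consecutive pair $(a,b)$ in the schedule with $a$ processed right before $b$, where $b$ is an ancestor in $P$ of $a$ is the wrong direction — I want $a$ after its descendant. Let me restate: pick an adjacent pair where the earlier job $a$ is a (not necessarily immediate) $P$-descendant of the later job $b$; since $P$ is a forest/outtree such an adjacent inverted pair exists whenever any inversion exists. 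Swapping $a$ and $b$ does not change the completion time of any job outside $\{a,b\}$, does not change $C_a$ after the swap relative to $C_b$ before, and — because $b$ is an ancestor of $a$ in the outtree — the relevant running maxima $E_\cdot$ either stay the same or decrease: $b$ now finishes earlier, which can only lower $E_b$ and the $E$-values of descendants of $b$ other than through $a$; and $a$ finishing later is harmless because $a$'s own energization is governed by $\max$ over its ancestors, which already includes the (now later-finishing) job that was at $b$'s old slot. I would make this precise by writing out the two cases for how $E_a$ and $E_b$ change and checking the weighted objective is non-increasing, using $w_a, w_b > 0$.

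The main obstacle I anticipate is handling the running-maximum structure carefully: unlike the classical $1 \mid prec \mid \sum w_j C_j$ exchange argument where swapping adjacent jobs changes only two completion times, here swapping two jobs can in principle change $E_k$ for many jobs $k$ (all descendants of $b$), so I need to argue that every affected $E_k$ is non-increasing. The fact that $G$ is acyclic so $P$ is an outtree is essential: each job has a unique ancestor-chain, so when I move $b$ earlier, for any descendant $k$ of $b$ the value $E_k = \max_{i \preceq_S k} C_i$ is a max over a fixed set of completion times only one of which (the one at $b$) decreased, hence $E_k$ is non-increasing; and for $a$ itself, its ancestor set now contains $b$ at an earlier position but also the job formerly before $a$ (now after $a$) — since $a \succ_S b$, that job's slot is still at or after $b$'s old position in the ordering, so $E_a$ does not exceed its previous value. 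Once the exchange argument shows an optimal schedule can be taken to be precedence-feasible, on a precedence-feasible single-crew schedule we have $C_i \le C_j$ whenever $i \preceq_S j$, hence $E_j = C_j$ for every $j$, and the objective reduces exactly to $\sum_j w_j C_j$ subject to the outtree constraints of $P$, establishing the equivalence claimed in the proposition. I would close by noting this also justifies using known $1 \mid outtree \mid \sum w_j C_j$ algorithms (e.g.\ Horn's / Sidney's decomposition) to solve the single-crew case optimally.
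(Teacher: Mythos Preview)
Your approach---an interchange argument on the single-crew sequence---is the same idea the paper uses: the paper also swaps a parent--child pair $(i\!-\!j,\,j\!-\!k)$ in $P$ and checks that the objective strictly increases when the child precedes the parent. You go further than the paper in flagging that a swap can change $E_k$ for many descendants, which is the right instinct. However, the specific claim you rely on, ``since $P$ is a forest/outtree such an adjacent inverted pair exists whenever any inversion exists,'' is false. Take $P$ to be the disjoint union of two chains $1\prec_S 2$ and $3\prec_S 4$, and consider the sequence $2,4,1,3$: both inversions $(2,1)$ and $(4,3)$ are present, yet every sequence-adjacent pair---$(2,4)$, $(4,1)$, $(1,3)$---is $P$-incomparable, so there is no adjacent inverted pair to swap. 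The outtree/forest structure does not help here, and your bubble-sort reduction stalls.

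One clean way to close the gap, avoiding adjacency altogether: given any single-crew sequence $\pi$, re-sort the jobs in nondecreasing order of $E_j(\pi)$, breaking ties by placing ancestors first. The resulting $\pi'$ is precedence-feasible because $i\preceq_S j\Rightarrow E_i(\pi)\le E_j(\pi)$. Moreover
\[
C_j(\pi')\;\le\;\sum_{k:\,E_k(\pi)\le E_j(\pi)}p_k\;\le\;\sum_{k:\,C_k(\pi)\le E_j(\pi)}p_k\;\le\;E_j(\pi),
\]
the middle inequality since $E_k(\pi)\ge C_k(\pi)$, and the last since on one machine the total processing of jobs completing by any time $T$ is at most $T$. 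Precedence-feasibility of $\pi'$ then gives $E_j(\pi')=C_j(\pi')\le E_j(\pi)$ for all $j$, so $\sum_j w_jE_j$ does not increase. Hence an optimal schedule may be taken precedence-feasible, whence $E_j=C_j$ and the problem collapses to $1\mid outtree\mid\sum w_jC_j$, exactly as you wanted to conclude. (The paper's own swap argument is terser and leaves the adjacency and multiple-$E_k$ issues implicit; your instinct to be more careful was sound, but the adjacency claim needs replacing.)
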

\begin{proof}
Given one repair crew, each schedule can be represented by a sequence of damaged lines. Let $i-j$ and $j-k$ be two damaged lines such that the node $(j,k)$ is the immediate successor of node $(i,j)$ in the soft precedence graph $P$. Let $\pi$ be the optimal sequence and $\pi^{\prime}$ another sequence derived from $\pi$ by swapping $i-j$ and $j-k$. Denote the energization times of nodes $j$ and $k$ in $\pi$ by $E_{j}$ and $E_{k}$ respectively. Similarly, let $E_{j}^{\prime}$ and $E_{k}^{\prime}$ denote the energization times of nodes $j$ and $k$ in $\pi^{\prime}$. Define $f := \sum_{n \in N}w_{n}E_{n}$.

Since node $k$ cannot be energized unless node $j$ is energized and until the line between it and its immediate predecessor is repaired, we have $E_{k}^{\prime} = E_{j}^{\prime}$ in $\pi'$ and $E_{k} > E_{j}$ in $\pi$. Comparing $\pi$ and $\pi^{\prime}$, we see that node $k$ is energized at the same time, i.e., $E_{k}^{\prime} = E_{k}$, and therefore, $E_{j}^{\prime} > E_{j}$. Thus:
\begin{equation}
\begin{aligned}
f(\pi^{\prime}) - f(\pi) & = (w_{j}E_{j}' + w_{k}E_{k}') - (w_{j}E_{j} + w_{k}E_{k}) \\
& = w_{j}(E_{j}' - E_{j}) + w_{k}(E_{k}' - E_{k}) > 0
\end{aligned}
\end{equation}
%
Therefore, any job swap that violates the outtree precedence constraints will strictly increase the objective function. Consequently, the optimal sequence must follow these constraints.
\end{proof}
 It follows immediately from Proposition~\ref{prop:outtree} that:
\begin{lemma}
Single crew repair and restoration scheduling in distribution networks is equivalent to $1 \mid outtree \mid \sum_j w_{j}C_{j}$, where the outtree precedences are given in the soft precedence constraint graph $P$.
\end{lemma}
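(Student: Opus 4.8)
The plan is to derive this lemma as an immediate corollary of Proposition~\ref{prop:outtree}, essentially by checking that the single-crew version of our scheduling problem matches the standard definition of $1 \mid outtree \mid \sum_j w_j C_j$ term-by-term. First I would observe that with a single crew the schedule is fully described by a permutation $\pi$ of the damaged lines $L^D$, and that for a line $l = h(l) \to t(l)$ the Proposition in Section~\ref{subsec:soft} (the $E_j = \max_{i \preceq_S j} C_i$ identity) together with the weight reinterpretation $w_l := w_{t(l)}$ lets us rewrite the objective $\sum_{n \in N} w_n E_n$ purely in terms of line quantities as $\sum_{j \in L^D} w_j E_j$ (nodes absorbed into intact supernodes contribute a constant independent of $\pi$, since their energization time is fixed once the unique damaged line feeding their supernode is repaired, which is exactly what $E_j = E_{t(j)}$ encodes).

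Next I would invoke Proposition~\ref{prop:outtree}: in the single-crew case the optimal sequence necessarily respects the precedence order of $P$, and by construction of $P$ (replacing lines of $G'$ by nodes and nodes of $G'$ by lines, with $G$ acyclic) each node of $P$ has at most one predecessor, so $P$ is an outtree. On any sequence that respects these precedences, a soft precedence constraint $i \prec_S j$ forces $E_i \le C_i \le \dots \le C_j = E_j$ when $j$ is energized, and more to the point, once we restrict attention to precedence-feasible single-crew sequences the identity $E_j = \max_{i \preceq_S j} C_i$ collapses to $E_j = C_j$, because all predecessors of $j$ in $P$ are scheduled before $j$ and hence have smaller completion times. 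Thus on the restricted (and, by Proposition~\ref{prop:outtree}, optimality-preserving) search space the objective becomes exactly $\sum_{j \in L^D} w_j C_j$ subject to the outtree constraints of $P$ — which is precisely an instance of $1 \mid outtree \mid \sum_j w_j C_j$. Conversely, any instance of $1 \mid outtree \mid \sum_j w_j C_j$ arises this way (indeed the star reduction in the proof of Theorem~1 already shows the embedding), so the two problems are equivalent, and an optimal solution to one yields an optimal solution to the other in polynomial time.

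The only genuinely delicate point — the one I would spend a sentence or two nailing down rather than waving at — is the reduction from $E_j$ to $C_j$: I must be careful that restricting to precedence-feasible sequences does not lose the optimum (this is exactly the content of Proposition~\ref{prop:outtree}, so it is already in hand) and that on such sequences the $\max$ in the energization identity is always attained at $j$ itself. The latter holds because if $i \prec_S j$ strictly, then $i$ precedes $j$ in any $P$-feasible single-crew sequence, so $C_i < C_j$; hence $\max_{i \preceq_S j} C_i = C_j$. Everything else is bookkeeping: matching $p_l$ with the job processing times, $w_l = w_{t(l)}$ with the job weights, and the outtree structure of $P$ with the precedence relation. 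I would therefore present the proof as: (i) reduce the objective to line variables, (ii) cite Proposition~\ref{prop:outtree} to restrict to $P$-feasible sequences without loss of optimality, (iii) note $E_j = C_j$ on this restricted space, (iv) conclude the objective and constraints coincide with $1 \mid outtree \mid \sum_j w_j C_j$, and (v) note the reduction is reversible via the construction in Theorem~1's proof, giving equivalence.
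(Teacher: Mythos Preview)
Your proposal is correct and takes the same approach as the paper: both derive the lemma as an immediate corollary of Proposition~\ref{prop:outtree}. The paper in fact gives no explicit proof beyond the sentence ``It follows immediately from Proposition~\ref{prop:outtree},'' so your write-up simply spells out the bookkeeping (rewriting the objective in line variables, observing $E_j = C_j$ on $P$-feasible single-crew sequences, and noting the outtree structure of $P$) that the paper leaves implicit.
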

\subsection{Recursive scheduling algorithm for single crew restoration scheduling}
As shown above, the single crew repair scheduling problem in distribution networks is equivalent to $1 \mid outtree \mid \sum w_{j}C_{j}$, for which an optimal algorithm exists~\cite{adolphson1973optimal}. We will briefly discuss this algorithm and the reasoning behind it. Details and proofs can be found in \cite{brucker2007scheduling}.
%
%
Let $J^D \subseteq L^D$ denote any subset of damaged lines. Define:
\begin{equation*}
w\left(J^D\right) := \sum_{j \in J^D} w_j,\quad p\left(J^D\right) := \sum_{j \in J^D} p_j, \quad q\left(J^D\right) := \frac{w\left(J^D\right)}{p\left(J^D\right)}
\end{equation*}
Algorithm \ref{alg:outree_merge}, adapted from \cite{brucker2007scheduling} with a change of notation, finds the optimal repair sequence by recursively merging the nodes in the soft precedence graph $P$. The input to this algorithm is the precedence graph $P$. Let $N(P) = \left\{1,2, \dots \vert N(P) \vert\right\}$ denote the set of nodes in $P$ (representing the set of damaged lines, $L^D$), with node $1$ being the designated root. The predecessor of any node $n \in P$ is denoted by $\text{pred}(n)$. Lines $1-7$ initialize different variables. In particular, we note that the predecessor of the root is arbitrarily initialized to be $0$ and its weight is initialized to $-\infty$ to ensure that the root node is the first job in the optimal repair sequence. Broadly speaking, at each iteration, a node $j \in N(P)$ ($j$ could also be a group of nodes) is chosen to be merged into its immediate predecessor $i \in N(P)$ if $q(j)$ is the largest. The algorithm terminates when all nodes have been merged into the root. Upon termination, the optimal single crew repair sequence can be recovered from the predecessor vector and the element $A(1)$, which indicates the last job finished.
%

We conclude this section by noting that Algorithm \ref{alg:outree_merge} requires the precedence graph $P$ to have a defined root. However, as illustrated in Section \ref{sec:problemForm}, it is quite possible for $P$ to be a forest, i.e., a set of disjoint trees. In such a situation, $P$ can be modified by introducing a dummy root node with a repair time of $0$ and inserting directed edges from this dummy root to the roots of each individual tree in the forest. This fictitious root will be the first job in the repair sequence returned by the algorithm, which can then be stripped off.
\begin{algorithm}
\caption{Optimal algorithm for single crew repair and restoration in distribution networks.}
\label{alg:outree_merge}
\begin{algorithmic}[1]
    \STATE $w(1) \leftarrow -\infty$; \quad $\text{pred}(1) \leftarrow 0$;
    %
    \FOR{$n = 1$ to $\vert N(P) \vert$}
    %
    \STATE $A(n) \leftarrow n; \quad B_n \leftarrow \{n\}; \quad q(n) \leftarrow w(n)/p(n)$;
    \ENDFOR
    \FOR{$n = 2$ to $\vert N(P) \vert$}
    \STATE $\text{pred}(n) \leftarrow \text{parent of $n$ in $P$}$;
    \ENDFOR
    \STATE $\text{nodeSet} \leftarrow \{1,2, \cdots, \vert N(P) \vert\}$;
    %
    \WHILE{$\text{nodeSet} \neq \{1\}$}
    %
    \STATE Find $j \in \text{nodeSet}$ such that $q(j)$ is largest; \quad \texttt{$\%$ ties can be broken arbitrarily}
    %
    %
    \STATE Find $i$ such that $\text{pred}(j) \in B_{i}$, $i = 1,2,\dots \vert N(P) \vert$;
    %
    \STATE $w(i) \leftarrow w(i) + w(j)$;
    %
    \STATE $p(i) \leftarrow p(i) + p(j)$;
    %
    \STATE $q(i) \leftarrow w(i)/p(i)$;
    %
    \STATE $\text{pred}(j) \leftarrow A(i)$;
    %
    \STATE $A(i) \leftarrow A(j)$;
    %
    \STATE $B_i \leftarrow \{B_i, B_j\}$; \quad \texttt{$\%$ `$,$' denotes concatenation}
    %
    \STATE $\text{nodeSet} \leftarrow \text{nodeSet} \setminus \{j\}$;
    \ENDWHILE
\end{algorithmic}
\end{algorithm}
%
%
\subsection{Conversion algorithm and an approximation bound}
A greedy procedure for converting the optimal single crew sequence to a multiple crew schedule is given in Algorithm~\ref{alg:convert_1_to_m}. We now prove that it is a $\left(2-\frac{1}{m}\right)$ approximation algorithm. We start with two lemmas that provide lower bounds on the minimal  harm for an $m$-crew schedule, in terms of the minimal harms for single crew and $\infty$-crew schedules. Let $H^{1,\ast}$, $H^{m,\ast}$ and $H^{\infty,\ast}$  denote the minimal harms when the number of repair crews is $1$, some arbitrary $m$ ($2 \leq m < \infty$), and $\infty$ respectively.
\begin{algorithm}
\caption{Algorithm for converting the optimal single crew schedule to an $m$-crew schedule}
\label{alg:convert_1_to_m}
\begin{algorithmic}
\STATE \textit{Treat the optimal single crew repair sequence as a priority list, and, whenever a crew is free, assign to it the next job from the list. The first $m$ jobs in the single crew repair sequence are assigned arbitrarily to the $m$ crews.}
\end{algorithmic}
\end{algorithm}

\begin{proposition}
$H^{m, *} \geq \frac{1}{m} \, H^{1, *}$
\label{prop:Hm1}
\end{proposition}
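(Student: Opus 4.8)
The plan is to prove the equivalent inequality $H^{1,\ast} \le m\,H^{m,\ast}$ by taking an optimal $m$-crew schedule and \emph{serializing} it into a single-crew schedule whose harm is at most $m$ times larger. Fix an optimal $m$-crew schedule $\sigma$, and let $C_j$ and $E_j$ be the completion and energization times of line $j$ under $\sigma$, so that $H^{m,\ast} = \sum_{j\in L^D} w_j E_j$. Re-index the lines as $(1),(2),\dots,(|L^D|)$ so that $C_{(1)} \le C_{(2)} \le \cdots$ (ties broken arbitrarily), and let $\sigma^1$ be the single-crew schedule that processes the lines in exactly this order with no idle time; write $C_j^1$ and $E_j^1$ for the resulting completion and energization times.

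The key step is the per-line bound $C_j^1 \le m\,C_j$. For the line in position $k$ we have $C_{(k)}^1 = \sum_{i=1}^{k} p_{(i)}$, since $\sigma^1$ runs the first $k$ jobs back to back. On the other hand, in the schedule $\sigma$ every line $(1),\dots,(k)$ has been completed by time $C_{(k)}$, so their total processing volume $\sum_{i=1}^{k} p_{(i)}$ is part of the work performed by the $m$ crews during $[0,C_{(k)}]$; since each crew performs at most $C_{(k)}$ units of work in that interval, $\sum_{i=1}^{k} p_{(i)} \le m\,C_{(k)}$, i.e. $C_{(k)}^1 \le m\,C_{(k)}$. This is just a work-volume (area) argument and holds regardless of idle time or how crews are assigned in $\sigma$.

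It remains to lift the bound to energization times and then to the objective. By the identity $E_j = \max_{i \preceq_S j} C_i$, which holds for \emph{every} feasible schedule, we obtain for each $j$
\begin{equation*}
E_j^1 = \max_{i \preceq_S j} C_i^1 \;\le\; \max_{i \preceq_S j} m\,C_i \;=\; m \max_{i \preceq_S j} C_i \;=\; m\,E_j .
\end{equation*}
Multiplying by $w_j \ge 0$ and summing over $j \in L^D$ gives $\sum_j w_j E_j^1 \le m \sum_j w_j E_j = m\,H^{m,\ast}$. Since $\sigma^1$ is one particular single-crew schedule, $H^{1,\ast} \le \sum_j w_j E_j^1 \le m\,H^{m,\ast}$, which is exactly the claim.

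I do not anticipate a serious obstacle: the argument is elementary once the serialization is chosen. The one point requiring care is making sure the energization-time identity is being applied to $\sigma^1$ as well as to $\sigma$ — which is legitimate precisely because that identity is a property of any feasible schedule, not of the optimal one. A minor bookkeeping remark is that the line weights $w_j$ are the same objects in the $m$-crew and single-crew instances, so the objective does not need to be re-derived when passing from $\sigma$ to $\sigma^1$.
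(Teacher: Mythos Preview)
Your proof is correct and follows the same high-level strategy as the paper: serialize the optimal $m$-crew schedule into a single-crew schedule and show the harm inflates by at most a factor of $m$ via a work-volume argument. The technical details differ in a way worth noting. The paper orders the serialized schedule by the \emph{energization} times $E_j^m$ (breaking ties so that precedence is respected), which guarantees $C_i^1 = E_i^1$ outright; it then bounds $E_i^1 = \sum_{\{j:\,E_j^m \le E_i^m\}} p_j \le \sum_{\{j:\,C_j^m \le E_i^m\}} p_j \le m\,E_i^m$, the last step being the same area argument you use. You instead order by the \emph{completion} times $C_j$, obtain the cleaner per-line bound $C_j^1 \le m\,C_j$ directly, and then lift to energization times via the identity $E_j = \max_{i \preceq_S j} C_i$ applied to both schedules. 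Your route avoids having to argue that the serialized schedule respects precedence and skips the subset step $\{j:E_j^m \le E_i^m\} \subseteq \{j:C_j^m \le E_i^m\}$; the paper's route avoids invoking the energization identity for a schedule that may violate precedence order. Both are equally valid and of comparable length.
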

\begin{proof}
Given an arbitrary $m$-crew schedule $S^m$ with harm $H^m$, we first construct a $1$-crew repair sequence, $S^1$. We do so by sorting the energization times of the damaged lines in $S^m$ in ascending order and assigning the corresponding sorted sequence of lines to $S^1$. Ties, if any, are broken according to precedence constraints or arbitrarily if there is none. By construction, for any two damaged lines $i$ and $j$ with precedence constraint $i \prec j$, the completion time of line $i$ must be strictly smaller than the completion time of line $j$ in $S^1$, i.e., $C_i^1 < C_j^1$. Additionally, $C_i^1 = E_i^1$ because the completion and energization times of lines are identical for a $1$-crew repair sequence which also meets the precedence constraints of $P$.

Next, we claim that $E_i^1 \leq m E_i^m$, where $E_i^1$ and $E_i^m$ are the energization times of line $i$ in $S^1$ and $S^m$ respectively.
In order to prove it, we first observe that:
\begin{align}
E_i^1 = C_i^1 = \sum_{\{j: \ E_j^m \leq E_i^m\}} p_j \ \ \leq \sum_{\{j: \ C_j^m \leq E_i^m\}} p_j \, ,
\end{align}
where the second equality follows from the manner we constructed $S^1$ from $S^m$ and the inequality follows from the fact that $C_j^m \leq E_j^m \Rightarrow \{j: \ E_j^m \leq E_i^m\} \subseteq \{j: \ C_j^m \leq E_i^m\}$ for any $m$-crew schedule. In other words, the number of lines that have been energized before line $i$ is energized is a subset of the number of lines on which repairs have been completed before line $i$ is energized. Next, we split the set $\{j:C_j^m \leq E_i^m\} := R$ into $m$ subsets, corresponding to the schedules of the $m$ crews in $S^m$, i.e., $R = \bigcup_{k=1}^{m}R^k$, where $R^k$ is a subset of the jobs in $R$ that appear in the $k^{th}$ crew's schedule. It is obvious that the sum of the repair times of the lines in each $R^k$ can be no greater than $E_i^m$. Therefore,
\begin{align}
  E_i^1 \leq \sum_{\{j: \ C_j^m \leq E_i^m\}} p_j := \sum_{j \in R} p_j = \sum_{k=1}^m \left(\sum_{j \in R^k} p_j\right) \leq m E_i^m
\end{align}
Proceeding with the optimal $m$-crew schedule $S^{m,\ast}$ instead of an arbitrary one, it is easy to see that $E_i^1 \leq m E_i^{m,\ast}$, where $E_i^{m,\ast}$ is the energization time of line $i$ in $S^{m,\ast}$. The lemma then follows straightforwardly.
\begin{align}
H^{m, *} = \sum_{i \in L^D} w_i E_i^{m,*} \geq \sum_{i \in L^D} w_i \, \frac{1}{m} \, E_i^1 = \frac{1}{m} \, \sum_{i \in L^D} w_i \, E_i^1 = \frac{1}{m} \, H^1 \geq \frac{1}{m} \, H^{1, *}
\end{align}
\end{proof}
%
%
\begin{proposition}
$H^{m,\ast} \geq H^{\infty,\ast}$
\label{prop:Hminf}
\end{proposition}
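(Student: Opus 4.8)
The plan is to prove the inequality $H^{m,\ast} \geq H^{\infty,\ast}$ by a straightforward monotonicity argument: any feasible schedule that uses only $m$ crews is also a feasible schedule when infinitely many crews are available, since having more crews can only enlarge the set of feasible schedules. First I would take the optimal $m$-crew schedule $S^{m,\ast}$ achieving harm $H^{m,\ast}$. This schedule assigns to each damaged line a start time, a completion time, and an induced energization time, while respecting the constraint that at most $m$ repairs proceed simultaneously and respecting the soft precedence structure of $P$ (via Proposition~1, the energization times are the running maxima of completion times along $\preceq_S$).

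Next I would observe that $S^{m,\ast}$ is, verbatim, a valid $\infty$-crew schedule: the only constraint that distinguishes the $\infty$-crew problem from the $m$-crew problem is the bound on the number of simultaneously active crews, and an $\infty$-crew schedule imposes no such bound. Hence $S^{m,\ast}$ is feasible in the $\infty$-crew setting with exactly the same completion times, energization times, and therefore exactly the same harm, $H^{m,\ast}$. Since $H^{\infty,\ast}$ is by definition the minimum harm over all feasible $\infty$-crew schedules, we get $H^{\infty,\ast} \leq H^{m,\ast}$, which is the claim.

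The only point that requires a little care — and the place I would expect a referee to want a sentence of justification — is the implicit claim that the feasible-schedule set is genuinely nested: an $m$-crew feasible schedule really is $\infty$-crew feasible. This is immediate here because the soft precedence constraints and the repair-time lower bounds $C_j \geq p_j$ are identical in both models, and only the crew-capacity constraint $\sum_{l} x_l^t \leq m$ is relaxed (to $\sum_l x_l^t \leq \infty$, i.e., no constraint). I would state this explicitly and conclude. No genuine obstacle is anticipated; the proof is a one-paragraph containment argument, in the same spirit as, but simpler than, the proof of Proposition~\ref{prop:Hm1}.
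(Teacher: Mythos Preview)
Your containment argument is correct: the feasible set for the $m$-crew problem is contained in the feasible set for the $\infty$-crew problem (only the crew-capacity constraint is relaxed), so the minimum harm can only decrease. This is a clean, general way to get the inequality.

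The paper, however, argues differently. Rather than invoking containment of feasible sets, it explicitly identifies the $\infty$-crew optimum: with unlimited crews every job can start at time zero, so $C_j^{\infty} = p_j$ and hence $E_j^{\infty,\ast} = \max_{i \preceq_S j} p_i$. It then observes that in any $m$-crew schedule $C_j^{m,\ast} \geq p_j$, which gives the pointwise bound $E_j^{m,\ast} = \max_{i \preceq_S j} C_i^{m,\ast} \geq \max_{i \preceq_S j} p_i = E_j^{\infty,\ast}$, and sums against the weights. Your argument is arguably more elegant and requires no knowledge of what the $\infty$-crew optimum actually is. The paper's route, though, has a concrete payoff: it establishes the closed-form expression $E_j^{\infty,\ast} = \max_{i \preceq_S j} p_i$, and this formula is used verbatim in the proof of Proposition~\ref{prop:ejm} (the last two equalities there). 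If you replace the paper's proof with your containment argument, you would still need to derive this identity somewhere before Proposition~\ref{prop:ejm}; otherwise that later proof has a gap.
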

\begin{proof}
This is intuitive, since the harm is minimized when the number of repair crews is at least equal to the number of damaged lines. In the $\infty$-crew case, every job can be assigned to one crew. For any damaged line $j \in L^D$, $C^{\infty}_j = p_j$ and $E^{\infty}_j = \max_{i \preceq j} C^{\infty}_i = \max_{i \preceq j} p_i$. Also, $C^{m,\ast}_j \geq p_j = C^{\infty}_j$ and $E^{m,\ast}_j = \max_{i \preceq j} C^{m,\ast}_i \geq \max_{i \preceq j} p_i = E^{\infty}_j$. Therefore:
\begin{align}
H^{m, *} = \sum_{j \in L^D} w_j E_j^{m,*} \geq \sum_{j \in L^D} w_j E^{\infty}_j = H^{\infty,\ast}
\end{align}
\end{proof}
\begin{proposition}
Let $E_j^m$ be the energization time of line $j$ after the conversion algorithm is applied to the optimal single crew repair schedule. Then, $\forall j \in L^D$, $E_{j}^{m} \leq \frac{1}{m} \, E_{j}^{1,\ast} + \frac{m-1}{m} \, E^{\infty,\ast}_j$. 
\label{prop:ejm}
\end{proposition}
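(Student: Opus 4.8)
The plan is to mirror the list-scheduling argument from the proof of Proposition~\ref{prop_firstLPSched}, with the optimal single-crew sequence now playing the role of the priority list. Fix a line $j \in L^D$, let $\sigma = (\sigma_1, \dots, \sigma_{|L^D|})$ be the optimal single-crew repair sequence that Algorithm~\ref{alg:convert_1_to_m} uses, and suppose $j = \sigma_k$. Set $R := \{\sigma_1, \dots, \sigma_{k-1}\}$, the set of jobs preceding $j$ in the list; since the list is processed in order and no crew is ever idle while unassigned jobs remain, $R$ is exactly the set of jobs already dispatched when $j$ is picked up. Splitting $R$ into the per-crew subsets $R^1, \dots, R^m$ and arguing as in eqn.~\ref{lemma4_impeq1}, the start time of $j$ in the constructed $m$-crew schedule satisfies
\[
S_j^m \;=\; \min\left\{\, \sum_{i \in R^k} p_i : k = 1, \dots, m \,\right\} \;\le\; \frac{1}{m}\sum_{i \in R} p_i .
\]

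Next I would express the right-hand side in terms of the single-crew and $\infty$-crew optima. The single-crew schedule runs the jobs of $\sigma$ with no idle time, so $C_j^{1,\ast} = \sum_{i \in R} p_i + p_j$; and since that schedule respects the precedence of $P$ by Proposition~\ref{prop:outtree}, every soft predecessor of $j$ completes before $j$, whence $E_j^{1,\ast} = \max_{i \preceq_S j} C_i^{1,\ast} = C_j^{1,\ast}$. Thus $\sum_{i \in R} p_i = E_j^{1,\ast} - p_j$, and therefore
\[
C_j^m \;=\; S_j^m + p_j \;\le\; \frac{1}{m}\bigl(E_j^{1,\ast} - p_j\bigr) + p_j \;=\; \frac{1}{m}\,E_j^{1,\ast} + \frac{m-1}{m}\,p_j .
\]
This completion-time bound holds for every line, in particular for every soft predecessor $i \preceq_S j$. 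Applying the Proposition that identifies $E_j$ with $\max_{i \preceq_S j} C_i$, I would then write
\[
E_j^m \;=\; \max_{i \preceq_S j} C_i^m \;\le\; \frac{1}{m}\max_{i \preceq_S j} E_i^{1,\ast} + \frac{m-1}{m}\max_{i \preceq_S j} p_i \;=\; \frac{1}{m}\,E_j^{1,\ast} + \frac{m-1}{m}\,E_j^{\infty,\ast},
\]
where the first maximum collapses to $E_j^{1,\ast}$ because $E^{1,\ast}$ is non-decreasing along the edges of $P$, and the second equals $E_j^{\infty,\ast}$ by the identity $E_j^{\infty,\ast} = \max_{i \preceq_S j} p_i$ established in the proof of Proposition~\ref{prop:Hminf}.

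I do not anticipate a serious obstacle. The two points requiring care are: (i) the claim that the set of jobs dispatched before $j$ is exactly $R$, which relies on list scheduling processing the sequence in order together with the absence of release dates (no crew idles while work remains); and (ii) keeping the two maxima distinct when lifting the completion-time bound to energization times, using the outtree structure of $P$ to collapse each maximum --- one to $E_j^{1,\ast}$, the other to $E_j^{\infty,\ast}$.
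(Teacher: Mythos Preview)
Your argument is correct and is essentially identical to the paper's own proof: you bound $S_j^m$ via the list-scheduling averaging step (as in Proposition~\ref{prop_firstLPSched}), convert this into $C_j^m \le \tfrac{1}{m}E_j^{1,\ast} + \tfrac{m-1}{m}p_j$, and then take the max over soft predecessors, splitting the two terms and collapsing each using the outtree monotonicity of $E^{1,\ast}$ and the identity $E_j^{\infty,\ast}=\max_{i\preceq_S j}p_i$. The only cosmetic difference is that the paper writes the completion-time bound with $C_j^{1,\ast}$ instead of $E_j^{1,\ast}$ before identifying the two, which you do one step earlier.
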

\begin{proof}
Let $S_j^m$ and $C_J^m$  denote respectively the start and energization times of some line $j \in L^D$ in the $m$-crew repair schedule, $S^m$, obtained by applying the conversion algorithm to the optimal $1$-crew sequence, $S^{1,\ast}$. Also, let $\mathcal{I}_j$ denote the position of line $j$ in $S^{1,\ast}$ and $\{k: \mathcal{I}_k < \mathcal{I}_j\} := R$ denote the set of all lines completed before $j$ in $S^{1,\ast}$. First, we claim that: $S_j^m \leq \frac{1}{m} \, \sum_{i \in R} p_i$. A proof can be constructed by following the approach taken in the proof of Proposition \ref{prop_firstLPSched} and is therefore omitted. Now:
%
\begin{align}
C_j^m &= S_j^m + p_j \\
&\leq \frac{1}{m} \, \sum_{i \in R} p_i + p_j \\
&= \frac{1}{m} \, \sum_{i \in R \,  \cup \, j} p_i + \frac{m-1}{m} \, p_j \\
& = \frac{1}{m} \, C_j^{1,\ast} + \frac{m-1}{m} \, p_j
\end{align}
and
\begin{align}
E_j^m &= \max_{i \preceq_S j} C_i^m \\
&\leq \max_{i \preceq j} \frac{1}{m} \, C_i^{1,\ast} + \max_{i \preceq j} \frac{m-1}{m} \, p_i \\
&= \frac{1}{m} \, C_j^{1,\ast} + \frac{m-1}{m} \, \max_{i \preceq j} p_i \\
&= \frac{1}{m} \, E_{j}^{1,\ast} + \frac{m-1}{m} \, E^{\infty,\ast}_j
\end{align}
\end{proof}
\begin{theorem}
The conversion algorithm is a $\left(2 - \frac{1}{m}\right)$-approximation.
\end{theorem}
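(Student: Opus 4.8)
The plan is simply to assemble the three preceding propositions; no new idea is needed. First I would take the per-line bound of Proposition~\ref{prop:ejm}, namely $E_{j}^{m} \leq \frac{1}{m}E_{j}^{1,\ast} + \frac{m-1}{m}E^{\infty,\ast}_{j}$, multiply both sides by the nonnegative weight $w_j$, and sum over all $j \in L^{D}$. Since the harm of any schedule equals $\sum_{j \in L^D} w_j E_j$ under the line-weight interpretation of Section~\ref{subsec:soft}, and since $\sum_{j\in L^D} w_j E_j^{1,\ast} = H^{1,\ast}$ and $\sum_{j\in L^D} w_j E_j^{\infty,\ast} = H^{\infty,\ast}$ (the vectors in Proposition~\ref{prop:ejm} being precisely those of the optimal single-crew and $\infty$-crew schedules), this yields
\[
H^{m} \;=\; \sum_{j \in L^{D}} w_{j} E_{j}^{m} \;\leq\; \frac{1}{m}\sum_{j \in L^{D}} w_{j} E_{j}^{1,\ast} \;+\; \frac{m-1}{m}\sum_{j \in L^{D}} w_{j} E_{j}^{\infty,\ast} \;=\; \frac{1}{m}\,H^{1,\ast} \;+\; \frac{m-1}{m}\,H^{\infty,\ast},
\]
where $H^{m}$ denotes the harm of the schedule produced by the conversion algorithm (Algorithm~\ref{alg:convert_1_to_m}).

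Next I would bound the right-hand side in terms of $H^{m,\ast}$ alone. Proposition~\ref{prop:Hm1} gives $H^{m,\ast} \geq \frac{1}{m}H^{1,\ast}$, i.e. $H^{1,\ast} \leq m\,H^{m,\ast}$, and Proposition~\ref{prop:Hminf} gives $H^{\infty,\ast} \leq H^{m,\ast}$. Substituting,
\[
H^{m} \;\leq\; \frac{1}{m}\cdot m\,H^{m,\ast} \;+\; \frac{m-1}{m}\,H^{m,\ast} \;=\; \left(1 + \frac{m-1}{m}\right)H^{m,\ast} \;=\; \left(2 - \frac{1}{m}\right) H^{m,\ast}.
\]
Finally I would note that the algorithm runs in polynomial time: Algorithm~\ref{alg:outree_merge} produces the optimal single-crew sequence in polynomial time, and building the $m$-crew schedule by list scheduling off that sequence costs $O(|L^D|)$ additional work. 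This completes the proof.

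The argument is essentially bookkeeping, so I do not anticipate a genuine obstacle; the only points deserving a line of justification are that summing the inequality of Proposition~\ref{prop:ejm} preserves its direction (because $w_j \ge 0$) and that the weighted sums of the energization-time vectors appearing there coincide exactly with $H^{1,\ast}$ and $H^{\infty,\ast}$. All the substance was already carried out in Propositions~\ref{prop:Hm1}, \ref{prop:Hminf}, and~\ref{prop:ejm}.
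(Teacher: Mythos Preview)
Your proposal is correct and follows essentially the same route as the paper: sum the per-line bound of Proposition~\ref{prop:ejm} weighted by $w_j$ to obtain $H^m \le \tfrac{1}{m}H^{1,\ast}+\tfrac{m-1}{m}H^{\infty,\ast}$, then invoke Propositions~\ref{prop:Hm1} and~\ref{prop:Hminf} to conclude $H^m \le (2-\tfrac{1}{m})H^{m,\ast}$. The only addition is your remark on polynomial running time, which the paper leaves implicit.
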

\begin{proof}
%
\begin{align}
H^m &= \sum_{j \in L^D} w_j E_j^{m} \\
&\leq \sum_{j \in L^D} w_j \left(\frac{1}{m} E_{j}^{1, *} + \frac{m-1}{m}E^{\infty}_j\right) \quad \textrm{$\cdots$ using Proposition~\ref{prop:ejm}} \\
&= \frac{1}{m} \, \sum_{j \in L^D} w_j E_{j}^{1, *} +  \frac{m-1}{m} \, \sum_{j \in L^D} w_j E^{\infty}_j \\
&= \frac{1}{m} \, H^{1, *} + \frac{m-1}{m} \, H^{\infty, *} \\
&\leq \frac{1}{m} \, \left(m H^{m, *}\right) + \frac{m-1}{m} \, H^{m, *} \quad \textrm{$\cdots$ using Propositions~\ref{prop:Hm1} - \ref{prop:Hminf}} \\
&= \left(2 - \frac{1}{m}\right) H^{m, *}
\end{align}
\end{proof}

\subsection{A Dispatch Rule}
\label{Sec:dispatchRule}
We now develop a multi-crew dispatch rule from a slightly different perspective, and show that it is equivalent to the conversion algorithm. In the process, we define a parameter, $\rho(l)$, $\forall l \in L^D$, which can be interpreted as a `component importance measure' (CIM) in the context of reliability engineering. This allows us to easily compare our conversion algorithm to standard utility practices. Towards that goal, we revisit the single crew repair problem, in conjunction with the algorithm proposed in~\cite{horn1972single}.

Let $S_{l}$ denote the set of all trees rooted at node $l$ in $P$ and $s^*_{l} \in S_{l}$ denote the minimal subtree which satisfies:
\begin{equation}
  \rho(l) := \frac{\sum_{j \in N(s^{\ast}_{l})} w_j}{\sum_{j \in N(s^{\ast}_{l})} p_j} = \max_{s_{l} \in S_{l}} \left( \frac{\sum_{j \in N(s_{l})} w_j}{\sum_{j \in N(s_{l})} p_j} \right) ,
\label{eqn:rhofacdev}
\end{equation}
where $N(s_{l})$ is the set of nodes in $s_{l}$. We define the ratio on the left-hand side of the equality in eqn.~\ref{eqn:rhofacdev} to be the $\rho$-factor of line $l$, denoted by $\rho(l)$. We refer to the tree $s^*_{l}$ as the minimal $\rho$-maximal tree rooted at $l$, which resembles the definitions discussed in~\cite{sidney1975decomposition}. With $\rho$-factors calculated for all damaged lines, the repair scheduling with single crew can be solved optimally, as stated in Algorithm \ref{alg:singlelst} below, adopted from~\cite{horn1972single}. Note that $\rho$-factors are defined based on the soft precedence graph $P$, whereas the following dispatch rules are stated in terms of the original network $G$ to be more in line with industry practices.
\begin{algorithm}
\caption{Algorithm for single crew repair scheduling in distribution networks}
\label{alg:singlelst}
\begin{algorithmic}
\STATE \textit{Whenever the crew is free, say at time $t$, select among the candidate lines the one with the highest $\rho$-factor. The candidate set comprises all the damaged lines, one of whose end points is within the set of energized nodes at time $t$.}
\end{algorithmic}
\end{algorithm}

It has been proven in~\cite{adolphson1973optimal} that Algorithms~\ref{alg:outree_merge} and~\ref{alg:singlelst} are equivalent. The $\rho$-factors can be calculated in multiple ways: (1) following the method proposed in~\cite{horn1972single}, (2) as a byproduct of Algorithm~\ref{alg:outree_merge}, and (3) using a more general method based on parametric minimum cuts in an associated directed precedence graph~\cite{lawler1978sequencing}. Algorithm~\ref{alg:singlelst} can be extended straightforwardly to accommodate multiple crews. However, in this case, it could happen that the number of damaged lines that are connected to energized nodes is smaller than the number of available repair crews. To cope with this issue, we also consider the lines which are connected to the lines currently being repaired, as described in Algorithm~\ref{alg:multilst} below.
\begin{algorithm}
\caption{Algorithm for multi-crew repair scheduling in distribution networks}
\label{alg:multilst}
\begin{algorithmic}
\STATE \textit{Whenever a crew is free, say at time $t$, select among the remaining candidate lines the one with the highest $\rho$-factor. The candidate set consists of all the damaged lines that are connected to already energized nodes, as well as the lines that are being repaired at time $t$. }
\end{algorithmic}
\end{algorithm}
\begin{theorem}
Algorithm \ref{alg:multilst} is equivalent to Algorithm \ref{alg:convert_1_to_m} discussed in Section \ref{sec_convAlgo}.
\label{thm:equiv5and3}
\end{theorem}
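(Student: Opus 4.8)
The plan is to show that the two algorithms produce identical $m$-crew schedules by arguing that the single-crew priority list used in Algorithm~\ref{alg:convert_1_to_m} is, at every step, consistent with the $\rho$-factor choices made by Algorithm~\ref{alg:multilst}. The first step is to recall (from~\cite{adolphson1973optimal}, cited in the excerpt) that Algorithms~\ref{alg:outree_merge} and~\ref{alg:singlelst} are equivalent, so the optimal single-crew sequence $S^{1,\ast}$ fed into the conversion algorithm is exactly the sequence that a greedy single-crew $\rho$-rule produces: at each point one picks, among the lines whose (soft-)predecessor has already been scheduled, the one lying in the minimal $\rho$-maximal subtree with the largest $\rho$-factor. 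The key structural fact I would extract from this is that $S^{1,\ast}$ restricted to any subtree respects a ``$\rho$-ordering'': whenever line $a$ precedes line $b$ in $S^{1,\ast}$ but $b$'s predecessor is scheduled no later than $a$'s, then $\rho(a)\ge\rho(b)$ (with ties broken consistently). This is precisely the invariant that makes list scheduling off $S^{1,\ast}$ mimic the $\rho$-dispatch rule.

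Next I would set up an induction on the sequence of job-assignment events (times at which some crew becomes free). The inductive hypothesis is that, up to the $k$-th assignment event, both algorithms have assigned the same set of jobs to the crews in the same way, hence the sets of energized nodes and of currently-in-repair lines coincide at that event. For the inductive step I must show both algorithms pick the same next line. Algorithm~\ref{alg:convert_1_to_m} picks the earliest unscheduled job in $S^{1,\ast}$; Algorithm~\ref{alg:multilst} picks the highest-$\rho$ line among those connected to an energized node or to a line under repair. I would argue: (i) every line eligible in Algorithm~\ref{alg:multilst} has its soft predecessor either already energized or already assigned (being repaired), hence is ``available'' in the single-crew sense given what has been scheduled so far; (ii) conversely, the earliest unscheduled job in $S^{1,\ast}$ has its predecessor already scheduled, so by the hypothesis that predecessor is energized or in repair, making this job eligible for Algorithm~\ref{alg:multilst}; and (iii) among the commonly-eligible lines, the $\rho$-ordering invariant of $S^{1,\ast}$ forces the earliest-in-$S^{1,\ast}$ line to be the maximum-$\rho$ line, so both rules agree (modulo a fixed tie-breaking convention). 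The point (iii) is where the characterization of $S^{1,\ast}$ via minimal $\rho$-maximal trees does the real work: one has to check that when a whole $\rho$-maximal subtree $s^\ast_l$ gets scheduled as a contiguous block in $S^{1,\ast}$, the block's jobs are exactly the ones that become successively eligible and top-ranked in Algorithm~\ref{alg:multilst}.

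The main obstacle I anticipate is reconciling the two different notions of ``candidate set.'' In the single-crew setting a job is a candidate once its unique predecessor is scheduled; in Algorithm~\ref{alg:multilst} a line is a candidate once it touches an energized node \emph{or} a line currently being repaired, and the latter clause has no direct single-crew analogue. I would handle this by showing that the ``connected to a line under repair'' lines are precisely the lines whose predecessors have been assigned but not yet completed, and that the $\rho$-factor of such a line is dominated by the $\rho$-factor of (a block containing) its predecessor — so that once a predecessor is chosen, its high-$\rho$ descendants are exactly next in line in both schemes. A secondary subtlety is tie-breaking: I would simply fix that both algorithms break $\rho$-factor ties by the order induced by $S^{1,\ast}$ (equivalently, by Algorithm~\ref{alg:outree_merge}'s merge order), which is harmless since ties in $\rho$ among precedence-comparable jobs can always be resolved in favor of the predecessor. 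With these two points settled, the induction closes and the two $m$-crew schedules — hence their energization vectors and their harms — coincide.
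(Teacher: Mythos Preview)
Your proposal is correct and follows essentially the same route as the paper: an induction on assignment events showing that (a) the candidate sets of the two algorithms coincide with the immediate successors of the already-scheduled jobs, and (b) both rules select the largest-$\rho$ candidate there, using the equivalence of Algorithms~\ref{alg:outree_merge} and~\ref{alg:singlelst} and a fixed tie-breaking convention. The paper's version is considerably terser---it states (a) and (b) without your explicit justification of why ``earliest unscheduled in $S^{1,\ast}$'' equals ``largest $\rho$ among candidates'' and without separately reconciling the two candidate-set definitions---but the underlying argument is the same.
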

\begin{proof}
As stated above, Algorithms~\ref{alg:outree_merge} and~\ref{alg:singlelst} are both optimal algorithms and we assume that, without loss of generality, they produce the same optimal sequences. Then it suffices to show that Algorithm~\ref{alg:multilst} converts the sequence generated by Algorithm~\ref{alg:singlelst} in the same way that Algorithm~\ref{alg:convert_1_to_m} does to Algorithm~\ref{alg:outree_merge}.

The proof is by induction on the order of lines being selected. In iteration $1$, it is obvious that Algorithms~\ref{alg:singlelst} and \ref{alg:multilst} choose the same line for repair. Suppose this is also the case for iterations $2$ to $t-1$, with the lines chosen for repair being $l_1$, $l_2$, $l_3$, $\cdots$, and $l_{t-1}$ respectively. Then, in iteration $t$, the set of candidate lines for both algorithms is the set of immediate successors of the supernode \{$l_1$, $l_2$, $\cdots$, $l_{t-1}$\}. Both algorithms will choose the job with the largest $\rho$-factor in iteration $t$, thereby completing the induction process.
\end{proof}
\subsection{Comparison with current industry practices}
\label{Sec:currentpractice}

According to FirstEnergy Group~\cite{FirstEnergypractice}, repair crews will ``address outages that restore the largest number of customers before moving to more isolated problems''. This policy can be interpreted as a priority-based scheduling algorithm and fits within the scheme of the dispatch rule discussed above, the difference being that, instead of selecting the line with the largest $\rho$-factor, FirstEnergy chooses the one with the largest weight (which turns out to be the number of customers). Edison Electric Institute \cite{EEIpractice} states that crews are dispatched to ``repair lines that will return service to the largest number of customers in the least amount of time''. This policy is analogous to Smith's ratio rule~\cite{smith1956various} where jobs are sequenced in descending order of the ratios $w_l/p_l$, ensuring that jobs with a larger weight and a smaller repair time have a higher priority. The parameter, $\rho(l)$, can be viewed as a generalization of the ratio $w_l/p_l$ and characterizes the repair priority of some damaged line $l$ in terms of its own importance as well as the importance of its succeeding nodes in $P$. Stated differently, $\rho(l)$ can be interpreted as a broad component importance measure for line $l$. Intuitively, we expect a dispatch rule based on $\rho(l)$ to work better than current industry practice since it takes a more holistic view of the importance of a line and, additionally, has a proven theoretical performance bound. Simulation results presented later confirm that a dispatch rule based on our proposed $\rho$-factors indeed results in a better restoration trajectory compared to standard industry practices.
\section{Case Studies}
In this section, we apply our proposed methods to three IEEE standard test feeders of different sizes. We consider the worst case, where all lines are assumed to be damaged. In each case, the importance factor $w$ of each node is a random number between 0 and 1, with the  exception of a randomly selected extremely important node with $w = 5$. The repair times are uniformly distributed on integers from $1$ to $10$. We compare the performances of the three methods, with computational time being of critical concern since restoration activities, in the event of a disaster, typically need to be performed in real time or near real time. All experiments were performed on a desktop with a 3.10 GHz Intel Xeon processor and 16 GB RAM. The ILP formulation was solved using Julia for Mathematical Programming with Gurobi 6.0.
\subsection{IEEE 13-Node Test Feeder}
The first case study is performed on the IEEE 13 Node Test Feeder shown in Fig.~\ref{fig:ieee13}, assuming that the number of repair crews is $m=2$. Since this distribution network is small, an optimal solution could be obtained by solving the ILP model. We ran 1000 experiments in order to compare the performances of the two heuristic algorithms w.r.t the ILP formulation.

Fig.~\ref{fig:ieee13gap} shows the density plots of optimality gaps of LP-based list scheduling algorithm (LP) and the conversion algorithm (CA), along with the better solution from the two (EN). Fig.~\ref{fig:ieeegap_00} shows the optimality gaps when all repair times are integers. The density plot in this case is cut off at $0$ since the ILP solves the problem optimally. Non-integer repair times can be scaled up arbitrarily close to integer values, but at the cost of reduced computational efficiency of the ILP. Therefore, in the second case, we perturbed the integer valued repair times by $\pm 0.1$, which represents a reasonable compromise between computational accuracy and efficiency. The optimality gaps in this case are shown in Fig.~\ref{fig:ieeegap_01}. In this case, we solved the ILP using rounded off repair times, but the cost function was computed using the (sub-optimal) schedules provided by the ILP model and the actual non-integer repair times. This is why the heuristic algorithms sometimes outperform the ILP model, as is evident from Fig.~\ref{fig:ieeegap_01}. In both cases, the two heuristic algorithms can solve most of the instances with an optimality gap of  less than $10\%$. Comparing the two methods, we see that the conversion algorithm (CA) has a smaller mean optimality gap, a thinner tail, and a better worst case performance. However, this does not mean that the conversion algorithm is universally superior. In approximately $34\%$ of the problem instances, we have found that the LP-based list scheduling algorithm yields a solution which is no worse than the one provided by the conversion algorithm.
\begin{figure}[h]
\subfloat[Integer-valued repair time]{
\label{fig:ieeegap_00}
\begin{minipage}[t]{0.5\textwidth}
\centering
\includegraphics[width = 1\textwidth]{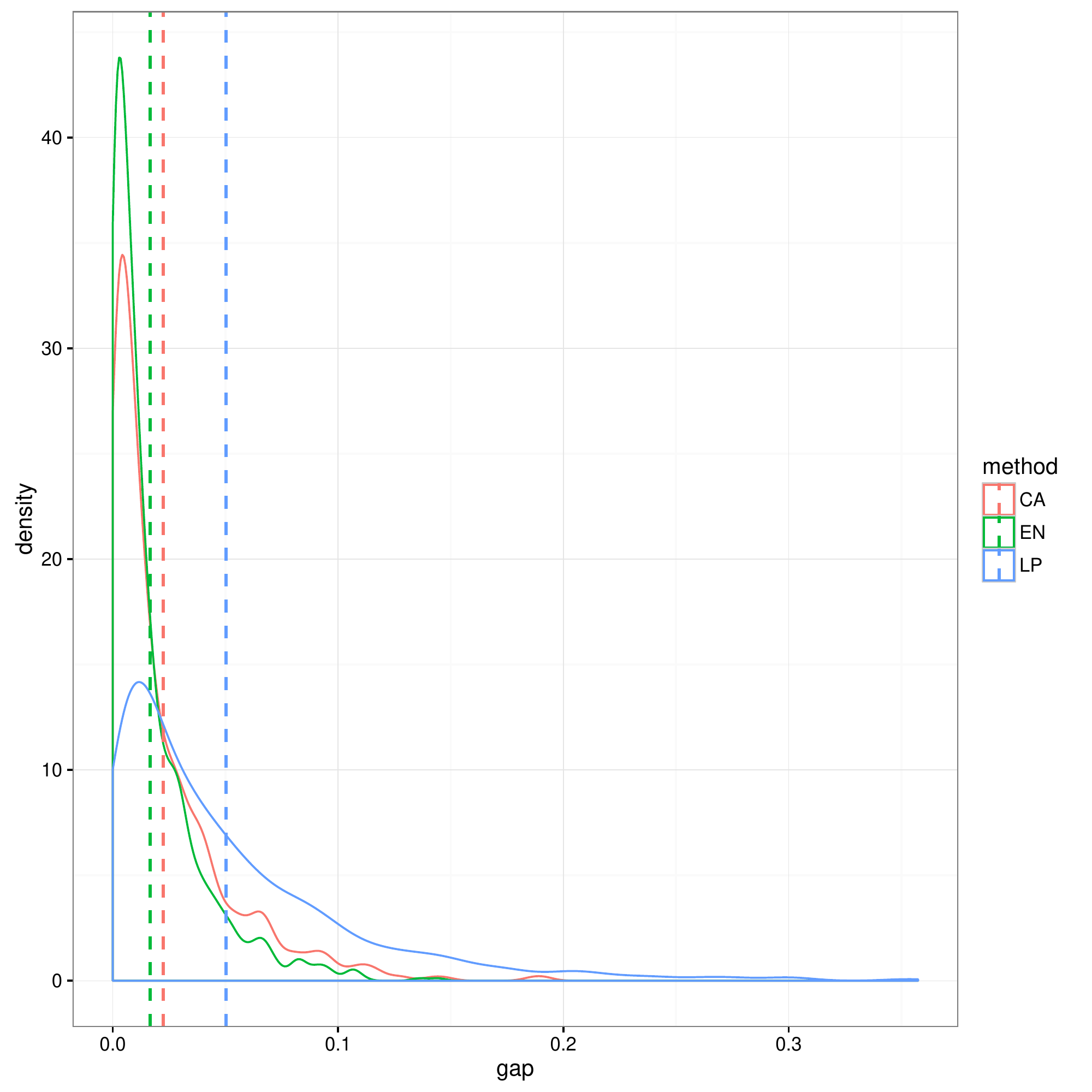}
\end{minipage}
}
\subfloat[Integer-$\pm 0.1$-valued repair time]{
\label{fig:ieeegap_01}
\begin{minipage}[t]{0.5\textwidth}
\centering
\includegraphics[width = 1\textwidth]{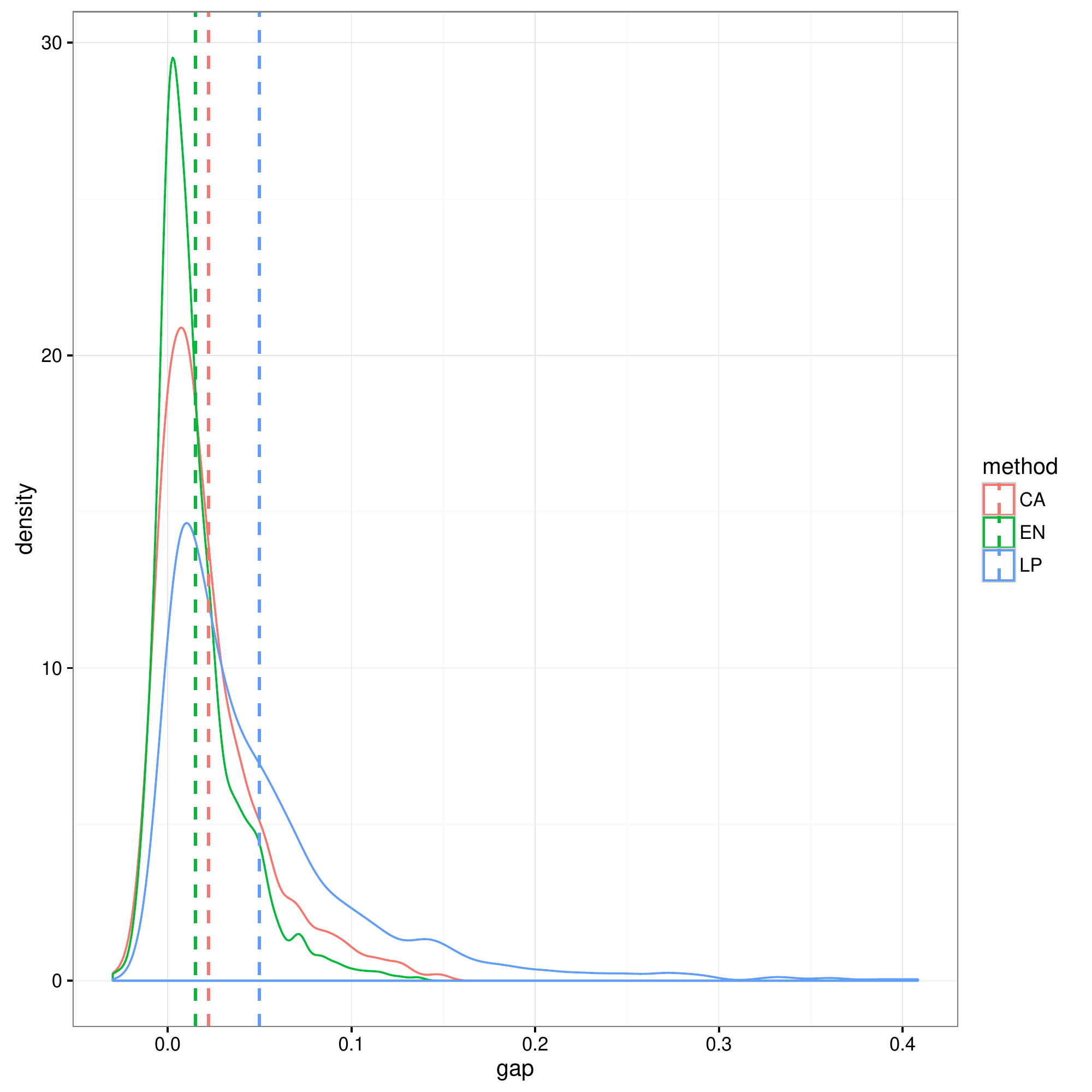}
\end{minipage}
}
\caption{Density plot of optimality gap with means}
\label{fig:ieee13gap}
\end{figure}
\subsection{IEEE 123-Node Test Feeder}
Next, we ran our algorithms on one instance of the IEEE 123-Node Test Feeder~\cite{kersting2001radial} with $m=5$. Since solving such problems to optimality using the ILP requires a prohibitively large computing time, we allocated a time budget of one hour. As shown in Table~\ref{tab:123comparison}, both LP and HA were able to find a better solution than the ILP, at a fraction of the computing time.

\begin{table}[htbp]
\centering
\begin{tabular}{c|c|c}
\hline
                     & Harm                 & Time(s)     \\ \hline
ILP                  & $3.0788 \times 10^3$ & 3600        \\ \hline
Conversion Algorithm & $2.2751 \times 10^3$ & \textless1s \\ \hline
Linear Relaxation    & $2.3127 \times 10^3$ & 24s         \\ \hline
\end{tabular}
\caption{Performance comparison for the IEEE 123-node test feeder}
\label{tab:123comparison}
\end{table}

\subsection{IEEE 8500-Node Test Feeder}
Finally, we tested the two heuristic algorithms on one instance of the IEEE 8500-Node Test Feeder medium voltage subsystem \cite{arritt2010the} containing roughly 2500 lines, with $m=10$. We did not attempt to solve the ILP model in this case. As shown in Table~\ref{tab:8500comparison}, it took about more than 60 hours to solve its linear relaxation (which is reasonable since we used the ellipsoid method to solve the LP with exponentially many constraints) and the conversion algorithm actually solved the instance in two and a half minutes.

We also compared the performance of our proposed $\rho$-factor based dispatch rule to standard industry practices discussed in Section~\ref{Sec:currentpractice}. We assign the same weights to nodes for all three dispatch rules. The plot of network functionality (fraction restored) as a function of time in Fig.~\ref{fig:traj_comparison} shows the comparison of functionality trajectories. While the time to full restoration is almost the same for all three approaches, it is clear that our proposed algorithm results in a greater network functionality at intermediate times. Specifically, an additional $10\%$ (approximately) of the network is restored approximately halfway through the restoration process, compared to standard industry practices.
%
\begin{table}[htbp]
\centering
\begin{tabular}{c|c|c}
\hline
                     & Harm                 & Time     \\ \hline
Conversion Algorithm & $7.201 \times 10^5$ & 150.64 s \\ \hline
Linear Relaxation    & $7.440 \times 10^5$ & 60.58 h         \\ \hline
\end{tabular}
\caption{Performance comparison for the IEEE 8500-node test feeder}
\label{tab:8500comparison}
\end{table}
\begin{figure}[htbp]
\centering
\includegraphics[width = 0.5\columnwidth]{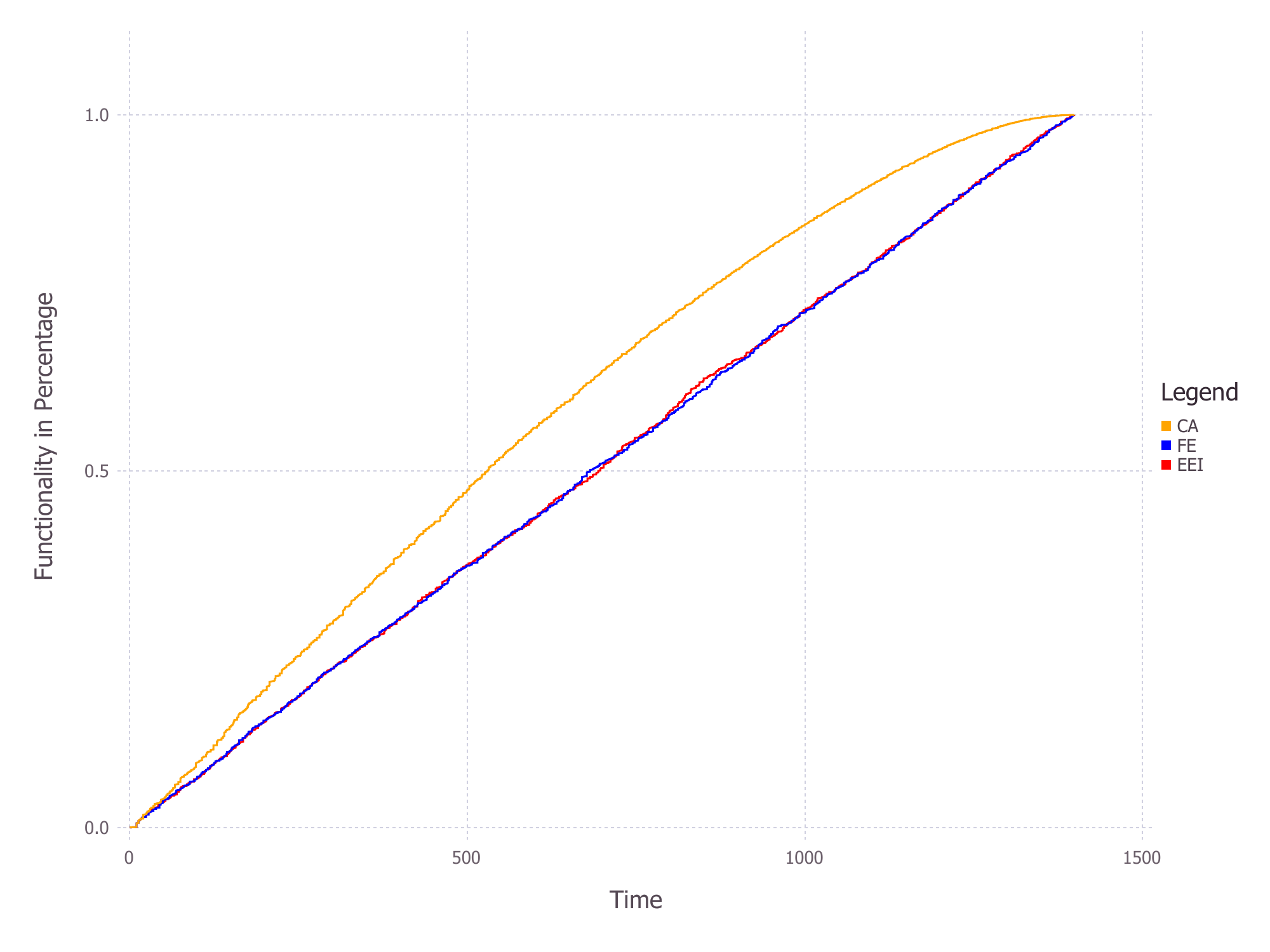}
\caption{Comparison of restoration trajectories: CA stands for our proposed $\rho$-factor based scheduling policy, FE for FirstEnergy Group's scheduling policy, and EEI for Edison Electric Institute's scheduling policy.}
\label{fig:traj_comparison}
\end{figure}
\subsection{Discussion}
From the three test cases above, we conclude that the ILP model would not be very useful for scheduling repairs and restoration in real time or near real time, except for very small problems. Even though it can be slow for large problems, the LP-based list scheduling algorithm can serve as an useful secondary tool for moderately sized problems. The conversion algorithm appears to have the best overall performance by far, in terms of solution quality and computing time.
\section{Conclusion}
In this paper, we investigated the problem of post-disaster repair and restoration in electricity distribution networks. We first proposed an ILP formulation which, although useful for benchmarking purposes, is feasible in practice only for small scale networks due to the immense computational time required to solve it to optimality or even near optimality. We then presented three heuristic algorithms. The first method, based on LP-relaxation of the ILP model, is proven to be a $2$-approximation algorithm. The second method converts the optimal single crew schedule, solvable in polynomial time, to an arbitrary $m$-crew schedule with a proven performance bound of $\left(2-\frac{1}{m}\right)$. The third method, based on $\rho$-factors which can be interpreted as component importance measures, is shown to be equivalent to the conversion algorithm. Simulations conducted on three IEEE standard networks indicate that the conversion algorithm provides very good results and is computationally efficient, making it suitable for real time implementation. The LP-based algorithm, while not as efficient, can still be used for small and medium scale problems.

Although we have focused on electricity distribution networks, the heuristic algorithms can also be applied to any infrastructure network with a radial structure (e.g., water distribution networks). Future work includes  development of efficient algorithms with proven approximation bounds which can be applied to arbitrary network topologies (e.g., meshed networks). While we have ignored transportation times between repair sites in this paper, this will be addressed in a subsequent paper. In fact, when repair jobs are relatively few and minor, but the repair sites are widely spread out geographically, optimal schedules are likely to be heavily influenced by the transportation times instead of the repair times. Finally, many  distribution networks contain switches that are normally open. These switches can be closed to restore power to some nodes from a different source. Doing so obviously reduces the aggregate harm. We intend to address this issue in the future.
%
\bibliography{det_res_ref}
\end{document}